\numberwithin{equation}{section}
\newtheorem{theorem}{Theorem}[section]
\newtheorem{corollary}[theorem]{Corollary}
\newtheorem{lemma}[theorem]{Lemma}
\newtheorem{remark}[theorem]{Remark}
\newcommand{\rset}{\mathbb{R}}
\newcommand{\cset}{\mathbb{C}}
\newcommand{\e}{\mathtt{e}}
\newcommand{\tz}{\widetilde{z}}
\DeclareMathOperator*{\argmin}{arg\,min}
\def\R{\mathbb{R}}
\def\VIZ#1{(\ref{#1})}
\def\ALPHAEX{\gamma} 
\def\rmD{{\mathrm{d}}}
\def\BARS{\mathbf{S}} %
\def\IU{\mathrm{i}}
\def\BFO{\boldsymbol{\omega}}
\def\PERIOD{\,.}
\def\RE{\mathrm{Re}}
\def\GF{\beta}
\def\GFO{\beta^*}
\def\rLK{\frac{1}{KL}}
\def\rJ{\frac{1}{J}}
\def\UBB{{\bar{\bar{u}}}}
\def\CTRL{\chi}
\begin{document}

\title[Accuracy of deep residual neural networks]{Smaller generalization error derived for a deep residual neural network compared 
to shallow networks
}


\author[Kammonen]{Aku Kammonen} 
\address{KTH Royal Institute of Technology, Stockholm, Sweden}

\author[Kiessling]{Jonas Kiessling}
\address{RWTH Aachen University, Aachen, Germany and KTH Royal Institute of Technology, Stockholm, Sweden}

\author[Plech\'a\v{c}] {Petr Plech\'a\v{c}}
\address{University of Delaware, Newark, USA \\}

\author[Sandberg]{        Mattias Sandberg}
\address{KTH Royal Institute of Technology, Stockholm, Sweden}

\author[Szepessy]{        Anders Szepessy}
\address{KTH Royal Institute of Technology, Stockholm, Sweden}

\author[Tempone]{        Ra\'ul Tempone}
\address{RWTH Aachen University, Aachen, Germany and KAUST, Saudi Arabia}





\maketitle

\begin{abstract}
{Estimates of the generalization error are proved for a residual neural network with $L$ random Fourier features layers
 $\bar z_{\ell+1}=\bar z_\ell + \RE\sum_{k=1}^K\bar b_{\ell k}e^{\IU\omega_{\ell k}\bar z_\ell}+
\RE\sum_{k=1}^K\bar c_{\ell k}e^{\IU\omega'_{\ell k}\cdot x}$. An optimal distribution for the frequencies $(\omega_{\ell k},\omega'_{\ell k})$ 
of the random Fourier features $e^{\IU\omega_{\ell k}\bar z_\ell}$ and $e^{\IU\omega'_{\ell k}\cdot x}$ is derived. This derivation is based on the corresponding generalization error for the approximation of the  function values $f(x)$. The generalization error turns out to be smaller than the estimate 
${\|\hat f\|^2_{L^1(\rset^d)}}/{(KL)}$ of the generalization error for random Fourier features with one hidden layer and the same total number of nodes $KL$, in the case the $L^\infty$-norm of $f$ is much less than the $L^1$-norm of its Fourier transform $\hat f$. 
This understanding of an optimal distribution for random features is used to construct a new training method for a deep residual network. Promising performance of the proposed new algorithm is demonstrated in computational experiments.}
\end{abstract}



\section{Introduction}

\subsection{Residual neural network formulation}
We study approximation properties of deep residual neural networks applied
to  supervised learning with a given data set 
$\{(x_n,y_n)\in \rset^d\times \rset \ | \ n=1,\ldots,N\}$, where the $x_n$ are independent samples from an unknown probability distribution.
We consider noisy data $y_n=f(x_n)+\epsilon_n$, for  functions $f:\rset^d\to \rset$, with their Fourier transform bounded in $L^1(\rset^d)$. 
The noise $\{\epsilon_n\}_{n=1}^N$ are independent and identically distributed (iid) samples, also independent of the data, with finite second moment $\epsilon^2$. 
The objective is to use the data  to determine a neural network
approximation that approximates the function $f$ as accurately as possible.
We focus on a particular case of a {\it residual} neural network with the network activation function $s(\omega,x)=e^{\IU \omega\cdot x}$.
We denote $\omega\cdot x=\sum_{i=1}^d \omega^i x^i$ the standard
Euclidean product on $\R^d$.
In effect we construct a multi-layer (deep) {\it random Fourier feature} approximation.

To motivate the use of the random Fourier features in the presented analysis 
we first introduce a neural
network with a single hidden layer with $K'$ nodes 
and the activation function $s(\omega,x)=e^{\IU \omega\cdot x}$.
The related standard neural network approximation problem, cf. \cite{understand},
requires to find
the amplitudes $\bar c_k\in \cset$ and frequencies $\omega'_k\in\rset^d$, $k=1,\dots,K'$,
which minimize  the risk functional, i.e., solving the problem
\[
\min_{\substack{(\omega_k',\bar c_k)\in\rset^{d}\times \cset\\ k=1,\ldots,K'}}
N^{-1}\sum_{n=1}^N|y_n-\RE \sum_{k=1}^{{K'}} {\bar c_{k}e^{\IU\omega'_{k}\cdot x_n}} |^2
\]
which for $N\to\infty$ becomes the generalization error
\begin{equation}
\min_{\substack{(\omega_k',\bar c_k)\in\rset^{d}\times \cset\\ k=1,\ldots,K'}}
\mathbb E_{xy}[|y-\RE \sum_{k=1}^{{K'}} {\bar c_{k}e^{\IU\omega'_{k}\cdot x}} |^2] \PERIOD
\label{min_one_layer}
\end{equation}
We denote by $\mathbb E_{xy}$ the expected value with respect to the $\{(x_n,y_n)\, |\, n=1,2,3,\ldots\}$ data distribution and by
$\mathbb E_\omega$ the expected value with respect to the frequency distribution $\{\omega_k,\omega_k')\,|\, k=1,2,\ldots\}$.

Here, we relax problem \VIZ{min_one_layer} by studying a more tractable problem, namely we assume instead that the frequencies 
$\{\omega_{k}'\in\rset^d\ |\ k=1,\ldots, K'\}$ 
are 
iid random variables with a certain distribution
$\bar p':\rset^d\to [0,\infty)$. Then
the generalization error for a random Fourier feature neural network with one hidden layer 
and $K'$ nodes has the bound
\begin{equation}
   \mathbb E_\omega\big[\min_{\bar c\in \cset^{K'}}
\mathbb E_{xy}[|f(x)-\RE \sum_{k=1}^{{K'}} {\bar c_{k}e^{\IU\omega'_{k}\cdot x}} |^2] \big]
\le \frac{1}{K'} \int_{\rset^d}\frac{|\hat f(\omega)|^2}{\bar p'(\omega)} \rmD \omega
\,,
\label{one_layer}
\end{equation}
where we introduce the Fourier transform 
\begin{equation}
\hat f(\omega):=\frac{1}{(2\pi)^{d}}\int_{\rset^d}f(x)e^{-\IU \omega\cdot x} \rmD x
\label{FTdef}
\end{equation}
and its  inverse representation 
\[
f(x)=\int_{\rset^d}\hat f(\omega)e^{\IU\omega\cdot x} \rmD x\,.
\]
The form \VIZ{FTdef} of the Fourier transform  is convenient for presenting various formulas without $2\pi$ factors.
The well known proof of \VIZ{one_layer}, cf. \cite{barron} and \cite{jones}, 
is based on Monte Carlo quadrature of this inverse Fourier representation,  see Lemma~\ref{MC}.

 The estimate \VIZ{one_layer} indicates the possibility to improve the bound by approximating the minimizing density  $\bar p'=|\hat f|/\|\hat f\|_{L^1(\rset^d)}$ (cf. Lemma~\ref{p_opt}), for instance by the adaptive Metropolis method in \cite{ARFM}.
Since a minimum is less than or equal to its corresponding mean, \VIZ{one_layer} implies
\begin{equation}
\begin{split}
\min_{\substack{(\bar c_k,\omega_k')\in\cset\times\rset^{d}\\ k=1,\ldots, K'}}
\mathbb E_{xy}[|f(x)-\RE \sum_{k=1}^{{K'}} {\bar c_{k}e^{\IU\omega'_{k}\cdot x}} |^2] 
&\le
\min_{\bar p'}\mathbb E_\omega\big[\min_{\bar c\in \cset^{K'}}
\mathbb E_{xy}[|f(x)-\RE \sum_{k=1}^{{K'}} {\bar c_{k}e^{\IU\omega'_{k}\cdot x}} |^2] \big]\\
&\le \frac{ \|\hat f\|_{L^1(\rset^d)}^2}{K'}\,,
\end{split}
\label{min_mean}
\end{equation}
which provides an error estimate also for convergent gradient or stochastic gradient approximations of the optimization problem on the left hand side.


The aim of this work is to generalize random Fourier features to include several hidden layers
and study the corresponding generalization error.
In particular we prove in Theorem~\ref{thm_main} that the generalization error 
for a deep residual neural network
can be smaller than the  bound  \VIZ{min_mean} 
for neural networks with one hidden layer and the same number of nodes.
This approximation result also provides an optimal parameter distribution,
which we use to formulate a generalization 
of the adaptive Metropolis method \cite{ARFM} to deep random features networks.

\smallskip

We study deep residual neural networks,  with $L$ layers, $K$ nodes on the initial layer and $2K$ nodes on the subsequent residual layers.
We define the single layer neural network function at the layer $\ell=0$
\begin{equation}\label{Gfundef}
\GF(x):= \RE \sum_{k=1}^{{K}} {\bar c_{0 k}e^{\IU\omega'_{0 k}\cdot x}}\,,
\end{equation}
and represent residuals at subsequent layers by
\begin{equation*} 
\begin{split}
 \bar z_{\ell+1}(x) & =\bar z_\ell(x) + \RE\sum_{k=1}^K\bar b_{\ell k}e^{\IU\omega_{\ell k}\bar z_\ell(x)}+
\RE\sum_{k=1}^K\bar c_{\ell k}e^{\IU\omega'_{\ell k}\cdot x}\,,\;\;\;
\ell=1,\ldots,L-1\, \\
\bar z_1 & = 0 \,.
\end{split}
\end{equation*}
For simplicity we denote this as $L$ layers, even though one would typically denote this a $2L$ layer network. 
The output of the network becomes
$$
\bar z_L(x) + \GF(x) \,.
$$
Note that there is no connection between layer $\ell=0$ and layers $\ell>0$, that is $\bar z_{\ell}(x)$ does not have an explicit dependence on $\GF(x)$. However, the parameters for $\bar z_{\ell}(x)$ depend on $\GF(x)$, as we will see below.

The residual neural network is thus defined by the parameters $\theta_{\ell k}=(\bar b_{\ell k},\omega_{\ell k})$, $\theta'_{\ell k}=(\bar c_{\ell k},\omega'_{\ell k})$, $\ell=0,\ldots,L-1$, $k=1,\ldots,K$ that minimize the generalization error
\begin{equation}\label{min_gerror0}
\begin{split}
&\min_{\substack{\theta_{0k}'\\ k=1,\ldots,K}} \mathbb E_{xy}
\Big[|y-\RE \sum_{k=1}^{{K}} {\bar c_{0k}e^{\IU\omega'_{0 k}\cdot x}} |^2\Big]\,,\;\;\mbox{and}\\
&\min_{\substack{
\theta_{\ell k}, \theta'_{\ell k}\\ 
\ell=1,\ldots, L-1 \\ 
k=1,\ldots,{{K}}
}} \mathbb E_{xy}\Big[|{\bar z}_L-\big(y-\GF(x)\big)|^2 + \bar\delta L \sum_{\ell=1}^{L-1}|\bar z_{\ell+1}-\bar z_\ell|^2\Big]\,,\\
&\mbox{subject, for $\ell=1,\ldots,L-1$, to } \\
&{\bar z}_{\ell +1} = {\bar z}_\ell + \RE \sum_{k=1}^{{K}} \bar b_{\ell k}e^{\IU \,  \omega_{\ell k} {\bar z}_\ell} 
+\RE \sum_{k=1}^{{K}} \bar c_{\ell k}e^{\IU\omega'_{\ell k}\cdot x}\,,
\\ 
&{\bar z}_1 = 0\,.
\end{split}
\end{equation}

Rather than solving the non-convex minimization problem \VIZ{min_gerror0}
we follow the strategy motivated by the single layer case explained above.
We use random Fourier features, i.e., choose random frequencies
$\omega_{\ell k}$, $\omega'_{\ell k}$ and solve first the
convex minimization
problem for the parameters $\bar c_{0 k }$
\begin{equation}\label{min_ls}
\min_{\substack{\bar c_{0k}\\ k=1,\ldots,K}}
\mathbb E_{xy}[|y-\RE \sum_{k=1}^{{K}} {\bar c_{0k}e^{\IU\omega'_{0 k}\cdot x}} |^2]\,,
\end{equation}
with the solution $\bar c^*_{0k}$, $k=1,\ldots,K$ of
\VIZ{min_ls} defining
the single layer approximation
\begin{equation}\label{betastar}
\GFO(x)= \RE \sum_{k=1}^{{K}} {\bar c^*_{0 k}e^{\IU\omega'_{0 k}\cdot x}}\,,
\end{equation}
which in turn defines the optimization problem for the deep residual network, i.e., for 
$(\bar b_{\ell k}, \bar c_{\ell k})$, $\ell=1,\ldots,L-1$ 
\begin{equation}\label{min_barz}
\begin{split}
&\min_{\substack{
(\bar b_{\ell k}, \bar c_{\ell k})\in \cset\times\cset\\ \ell=1,\ldots, L-1 \\ k=1,\ldots,K
}}
\mathbb E_{xy}\big[|{\bar z}_L-\big(y-\GFO(x)\big)|^2 + \bar\delta L \sum_{\ell=1}^{L-1}|\bar z_{\ell+1}-\bar z_\ell|^2\big]\,,\\
&\mbox{subject, for $\ell=1,\ldots,L-1$, to } \\
&{\bar z}_{\ell +1} = {\bar z}_\ell + \RE \sum_{k=1}^{{K}} \bar b_{\ell k}e^{\IU \,  \omega_{\ell k} {\bar z}_\ell}
+\RE \sum_{k=1}^{{K}} \bar c_{\ell k}e^{\IU\omega'_{\ell k}\cdot x}\,,\\
&{\bar z}_1 = 0\,.
\end{split}
\end{equation}
The purpose of the initial optimization \VIZ{min_ls} is to obtain an optimization problem for $\bar z_\ell$ with smaller data $y-\GFO(x)$ instead of $y$, which turns out to be useful in our proof and experiments; to simplify, the reader may consider $\beta=\beta^*=0$, except in \eqref{beta_zero}.
The non negative number $\bar\delta$ is a  regularization parameter. 
We include the penalty term, $\bar\delta L \sum_{\ell=0}^{L-1}|\bar z_{\ell+1}-\bar z_\ell|^2$, 
since the penalty has a role in finding an optimal solution
as described in Section~\ref{sec_2}.
The random frequencies $\omega_{\ell k}$, $\omega'_{\ell k}$ are independent identically distributed random variables for $\ell=0,\ldots, L-1$ and $k=1,\ldots, K$ and are sampled from yet to be determined distributions with densities depending on the frequencies and layers. 

In the case that all $\bar b_{\ell k}=0$, the formulation \VIZ{min_barz} is equivalent to approximation with  a random Fourier feature network with one hidden layer and $KL$ nodes.
The purpose of the presented analysis is to derive  estimates of the generalization error for \VIZ{min_barz}
that improves the bound \VIZ{min_mean}, with $K'=KL$,
and determine  optimal choices of $L,K,\bar p,\bar p',\bar\delta$ that minimize the generalization error for \VIZ{min_barz}.

%
%

Our analysis is based on viewing the optimization problem \VIZ{min_barz} as a discrete version of a continuous
time optimal control problem. More precisely, in our analysis we use the solution to an optimal control problem 
related to \VIZ{min_barz} with infinite number of nodes  and layers. 
Details of the analysis and the complete proof are provided in Sections~\ref{sec_2} and \ref{sec_proof}. 
Here, we outline only a basic
connection between the optimization problem \VIZ{min_barz}, viewed as a discrete optimal control, and its 
continuum time counter part. We postpone the detailed discussion to Section~\ref{sec_2}.

\smallskip

For simplicity,  in this section we assume that the data is noiseless, i.e., $\epsilon = 0$,  and $y_n=f(x_n)$ for some (unknown) function $f$.
We introduce time-dependent controls $\hat b:[0,1]\times\rset\to\cset$ and
$\hat c:[0,1]\times\rset^d\to\rset$. We define the control
function $\alpha:[0,1]\times\rset\times\rset^d\to \rset$
\begin{equation}\label{def_alpha}
\alpha(t,z;x) := \RE \int_{\rset} \hat b(t,\omega)e^{\IU \omega z_t} \rmD\omega 
                  + \RE\int_{\rset^d} \hat c(t,\omega')e^{\IU \omega'\cdot x}\rmD\omega'\,,
\end{equation}
and formulate the optimal control problem with the time-dependent 
state function $z_t(x)$ with its dynamics defined by $\alpha(t,z;x)$.
Thus we solve the optimal control problem
\begin{equation}\label{min_z1}
\begin{split}
&\min_{\substack{
\hat b:[0,1]\times\rset\to\mathbb C\\
\hat c:[0,1]\times\rset^d\to \mathbb C}
}
\mathbb E_{x}\big[| z_1-\big(f(x)-\GF(x)\big)|^2 + \delta\int_0^1 |\alpha(t,z_t,x)|^2 \rmD t\big] \,,\\
&\mbox{subject to }\;\; \frac{{\rmD} z_t}{{\rmD}t} = \alpha(t,z_t;x) 
\\
 &z_0 = 0\,,
\end{split}
\end{equation}
where $\GF(x)$ is defined by \VIZ{Gfundef}.
The optimal control problem \VIZ{min_z1} can be solved explicitly as described in Lemma~\ref{lemma_z}. The explicit solution is then used to derive the bounds \VIZ{C1} and \VIZ{pp'} in Theorem~\ref{thm_main}, 
see Section~\ref{sec_2}.

The relation to the optimization problem \VIZ{min_barz} is obtained
by using Monte Carlo approximation of integrals over $\omega$, $\omega'$ and $t$,
thus introducing random
times and frequencies $(t_{\ell k},\omega_{\ell k})\sim p(t,\omega) \rmD t\,\rmD\omega$,
$(t'_{\ell k},\omega'_{\ell k})\sim p'(t',\omega') \rmD t'\,\rmD\omega'$
in 
\[
\alpha(t,z_t;x) =  \RE \int_{\rset} \hat b(t,\omega)e^{\IU \omega z_t} \rmD\omega 
                  + \RE\int_{\rset^d} \hat c(t,\omega')e^{\IU \omega'\cdot x}\rmD\omega'\,,
\]
which gives the Monte Carlo approximations
\begin{eqnarray}\label{omega_t_MC}
    \sum_{\ell=0}^{L-1}\int_{t_{\ell}}^{t_{\ell+1}}\int_{\rset} \hat b(t,\omega)e^{\IU \omega z_t}\rmD t \rmD\omega &\approx& \rLK \sum_{\ell=0}^{L-1}\sum_{k=1}^K\frac{\hat b(t_{\ell k}, \omega_{\ell k}) e^{\IU\omega_{\ell k}\tz_\ell}}{p(t_{\ell k},\omega_{\ell k})}\,, \\
    \sum_{\ell=0}^{L-1}\int_{t_{\ell}}^{t_{\ell+1}}\int_{\rset^d} \hat c(t',\omega')e^{\IU \omega'\cdot x}\rmD t'\rmD\omega' &\approx & \rLK\sum_{\ell=0}^{L-1}
    \sum_{k=1}^K \frac{ \hat c(t'_{\ell k},\omega'_{\ell k}) e^{\IU\omega'_{\ell k}\cdot x}}{p'(t'_{\ell k},\omega'_{\ell k})}\,,
\end{eqnarray}
with $\tz_\ell:= \tz(t_\ell)$ denoting the state dynamics $z_t$ evaluated at times 
$t_\ell = Q^{-1}(\tfrac{\ell}{L})$,
where $Q:[0,1]\to[0,1]$ is the cumulative distribution function for a given density $q:[0,1]\to [0,\infty)$.
The random times and random frequencies are independent, therefore the distributions take the form
$p(t,\omega) = \bar p(\omega) q(t)$ and $p'(t',\omega') = \bar p'(\omega') q'(t')$,
$t_\ell = Q^{-1}(\tfrac{\ell}{L}))$.
Thus, heuristically, in the limit $L\to\infty$ and $K\to\infty$ 
\[
\bar b_{\ell k}\simeq \rLK \frac{\hat b(t_{\ell k}, \omega_{\ell k})}{p(t_{\ell k},\omega_{\ell k})}\;\;
\mbox{ and }\;\; \bar c_{\ell k}\simeq \rLK \frac{ \hat c(t'_{\ell k},\omega'_{\ell k})}{p'(t'_{\ell k},\omega'_{\ell k})}\,,
\]
and the optimal solution of the  problem \VIZ{min_barz} can be linked to the approximation of
the optimal control solution of \VIZ{min_z1}. The analysis presented in Section \ref{sec_2} clarifies this connection rigorously and
a precise formulation of $t_{\ell k},t'_{\ell k},$ $\omega_{\ell k},\omega'_{\ell k}$, including stratified sampling for $t_{\ell k},t'_{\ell k}$ related to the layers, is presented in Section \ref{MainRes}.

\begin{remark}
{\rm
The optimization problem  \VIZ{min_barz} can be generalized to use $d$-dimensional control functions 
${\bar z}_\ell\in\rset^d$ in which case we have  $(\bar b_{\ell k},\bar c_{\ell k})\in \cset^d\times \cset^d$.
For a given fixed vector $\e\in\R^d$ we then have an optimization problem analogous to \VIZ{min_barz}
\begin{equation}\label{min_barze}
\begin{split}
&\min_{{(\bar b_{\ell k}, \bar c_{\ell k})\in \cset^d\times\cset^d}}
\mathbb E_{xy}\big[|\e\cdot{\bar z}_L-\big(y-\GF(x)\big)|^2
+\bar\delta L\sum_{\ell=0}^{L-1}|\bar z_{\ell+1}-\bar z_\ell|^2\big]\,,\\
&\mbox{subject, for $\ell = 1,\ldots, L-1$, to }\\ &{\bar z}_{\ell +1} = {\bar z}_\ell + \RE \sum_{k=1}^{{K}} 
      \bar b_{\ell k}e^{\IU\omega_{\ell k}\cdot {\bar z}_\ell}
+\RE \sum_{k=1}^{{K}} \bar c_{\ell k}e^{\IU\omega'_{\ell k}\cdot x}\,,\\
&{\bar z}_1 = 0\, ,
\end{split}
\end{equation}
with a similar error estimate as for \VIZ{min_barz}. 
A common formulation is to let all $\bar c_{\ell k}=0$, see \cite{Weinan2018AMO}, where also a different regularization term is used.  Then the initial data is typically ${\bar z}_0=x$ instead of ${\bar z}_0=0$.  The vector $\e$ can also be replaced by another vector and the dimension of the vectors could be different from $d$.
}
\end{remark}

\begin{remark}
{\rm 
The derivation of the error estimates here shows an advantage of including non zero coefficients $\bar c_{\ell k}$, which also can be motivated from the perspective of using a Markov control allowed to depend on both the state $\bar z_\ell$ and the  data $x$. The inclusion of such previous layers in the dynamics of the state in deep neural networks is studied in so-called deep dense neural networks, see \cite{dense}. Note that the control variables $\bar b_{\ell k}$ and $\bar c_{\ell k}$ only depend on the distribution of the data $\{(x_n,y_n)\, | \, n=\mathbb Z_+\}$ and not on the individual outcomes.
}
\end{remark}
%
%
\subsection{Relation to previous work}
The main inspiration of our work is the construction and optimal control analysis of deep residual neural networks in \cite{weinan_apriori}, which proves an error estimate of the generalization error including also the more demanding case with finite sets of data $(x,y)$.  
That error estimate does not improve the error estimate compared to approximation with a single hidden layer. Other important  previous results include the formulation of random Fourier features in \cite{rahimi_recht} and the optimal control perspective in  \cite{Weinan2018AMO}. 
The main new mathematical idea in our work is to identify a simple structure in a related infinite dimensional optimal control problem \VIZ{min_z1} and use it to identify the partition of the control into two parts:
one part depending on the data $x$ and the other part depending on the state $\bar z_\ell$; and then use the two parts to minimize the variance of the Monte Carlo quadrature present in deep residual neural networks.
The mathematical technique used in our work 
is a combination of standard Monte Carlo approximations adapted to the error analysis of approximations of differential equations and optimal control problems, 
where the approximation error is represented as an integral of the difference between the exact and approximate value functions along the exact solution path, as e.g., in  \cite{SDE_adapt}.

There are several results on improved approximation for deep neural networks as compared to shallow networks for certain functions, e.g., in \cite{telgarsky}, \cite{tegmark}, \cite{Schwab}.  
The work  \cite{allenzhu2019resnet} proves that also the generalization error obtained from the stochastic gradient method 
can be smaller for three layer neural networks, based on a residual neural network of the form \VIZ{min_barz}, compared to kernel methods using the same number of stochastic gradient descent steps. The result uses the rectifier linear unit activation function, $v\mapsto \max(0,v)$, instead of the Fourier activation function $v\mapsto e^{\IU v}$ used in this study. 
The example providing better approximation in neural networks with three layers is
based on approximated functions that are composition functions related to the neural network construction. 

Our Theorem~\ref{thm_main} also estimates the generalization error instead of the minimal error in a certain norm, as in \cite{telgarsky,tegmark,Schwab}.  The error estimate in Theorem~\ref{thm_main} is not based on functions related to  the compositions given by the neural network, as in \cite{allenzhu2019resnet},   instead on functions with $L^1$-bounded Fourier transform.
This theorem establishes insight on the optimal distribution of all parameters 
for the deep residual network \VIZ{min_barz}. The theorem and Remark \ref{rem_C} also shows that for functions $f$, with $\|f\|_{L^\infty(\rset^d)}\ll \|\hat f\|_{L^1(\rset^d)}$, 
supervised learning with deep networks could have less generalization error compared to networks with one hidden layer. 
Note that we study upper bounds here, so we cannot conclude that deep residual networks have smaller generalization errors than shallow: although the Monte Carlo approximation error we use is sharp, 
the minimization error can be smaller depending on the regularity of the $x$ sampling density, see \cite{bach}. Clearly other deep neural networks could have better approximation by other reasons than those studied here. The purpose of studying the special network \VIZ{min_barz} is that for this particular setting we can provide theoretical motivation on the approximation and its optimal parameter distribution.

\smallskip

We present the result in Section~\ref{MainRes} and split the proof together with the computational
demonstration in the subsequent five sections. Section~\ref{sec_1-2} provides 
background on Monte Carlo quadrature and Section~\ref{sec_2} presents the optimal control solution 
of the related problem to \VIZ{min_barz} with infinite number of layers $L$ and nodes per layer $K$. 
The lemmas in these two sections are then used in Section~\ref{sec_proof} which proves the main result
in Theorem~\ref{thm_main}.   Sections~\ref{sec_alg} and \ref{sec:numerics} present new numerical algorithms and experiments, using the obtained optimal random feature distribution, to approximate the deep residual network problem \VIZ{min_barz} based on finite amount of data and adaptive Metropolis sampling of the frequencies. In particular numerical tests confirm that deep residual networks can have smaller generalization error compared to networks with one hidden layer and the same total number of nodes. The final Section~\ref{sec_conclusion} includes a short summary of the work.

\section{Statement and discussion of the main result}\label{MainRes} 
We begin by stating the main result and present the proof and supporting lemmas in subsequent sections.
The result provides the optimal densities $p(t,\omega)\rmD t\rmD\omega$ and $p'(t',\omega')\rmD t'\rmD\omega'$ and 
sharp upper bounds on the generalization error of the studied residual neural network.
The usage of the optimal densities in time-frequency domains for Monte Carlo approximation of integrals
is one of the principal features, thus we first explain the construction of the Monte Carlo 
approximation based on a stratified sampling approach.

To each random frequency $\omega_{\ell k}$ we associate a random time $t_{\ell k}\in [0,1]$ with the purpose to construct
 Monte Carlo approximations of the integrals over $\rset^d\times[0,1]$ in \VIZ{omega_t_MC}, such that each layer $\ell$
has $K$ independent times $t_{\ell k}, \ k=1,\ldots,K$, where $K$ is fixed and non random. 
While the sampling of random frequencies $\omega_{\ell k}$ and $\omega'_{\ell k}$ is straightforward, for the random times 
$t_{\ell k}$ and $t'_{\ell k}$ we employ stratified sampling as follows. 
For a given density $q:[0,1]\to [0,\infty)$ we denote its cumulative distribution function $Q(t) := \int_0^t q(s)\,{\rm d}s$. Then, 
for each layer $\ell\in\{0,\ldots,L-1\}$
let  $\tau_{\ell k}$, $k=1,\ldots, K$ be independent and uniformly distributed random variables on $[\frac{\ell}{L},\frac{\ell+1}{L})$,  independent also of all $\omega_{\ell k}$ and all $\omega'_{\ell k}$. 
We define the random times $t_{\ell k}:=Q^{-1}(\tau_{\ell k})$ and the non-random time levels $t_\ell:=Q^{-1}(\frac{\ell}{L})$, see Figure \ref{fig:tau-t}.
Assuming that $t\in [t_{\ell},t_{\ell+1})$ the density for the random variable $t_{\ell k}$ is given by 
\[
\begin{split}
\mathbb{P}\big(t_{\ell k}\in[t,t+{\rm d}t) \big)
&=\mathbb{P}\big(\tau_{\ell k}\in\big[Q(t),Q(t+{\rm d}t)\big)\big)
=\frac{q(t){\rm d}t}{L^{-1}}=Lq(t){\rm d}t\,.
\end{split}
\]
We also define the space time density $p(t,\omega):=\bar p(\omega)q(t)$.
Similarly, we associate 
$t'_{\ell k}$ with the density $q':[0,1]\to [0,\infty)$  to $\omega'_{\ell k}$ and define  $p'(t,\omega):=\bar p'(\omega)q'(t)$.

\begin{figure}[t!]
\centering
\includegraphics[width=0.6\textwidth]{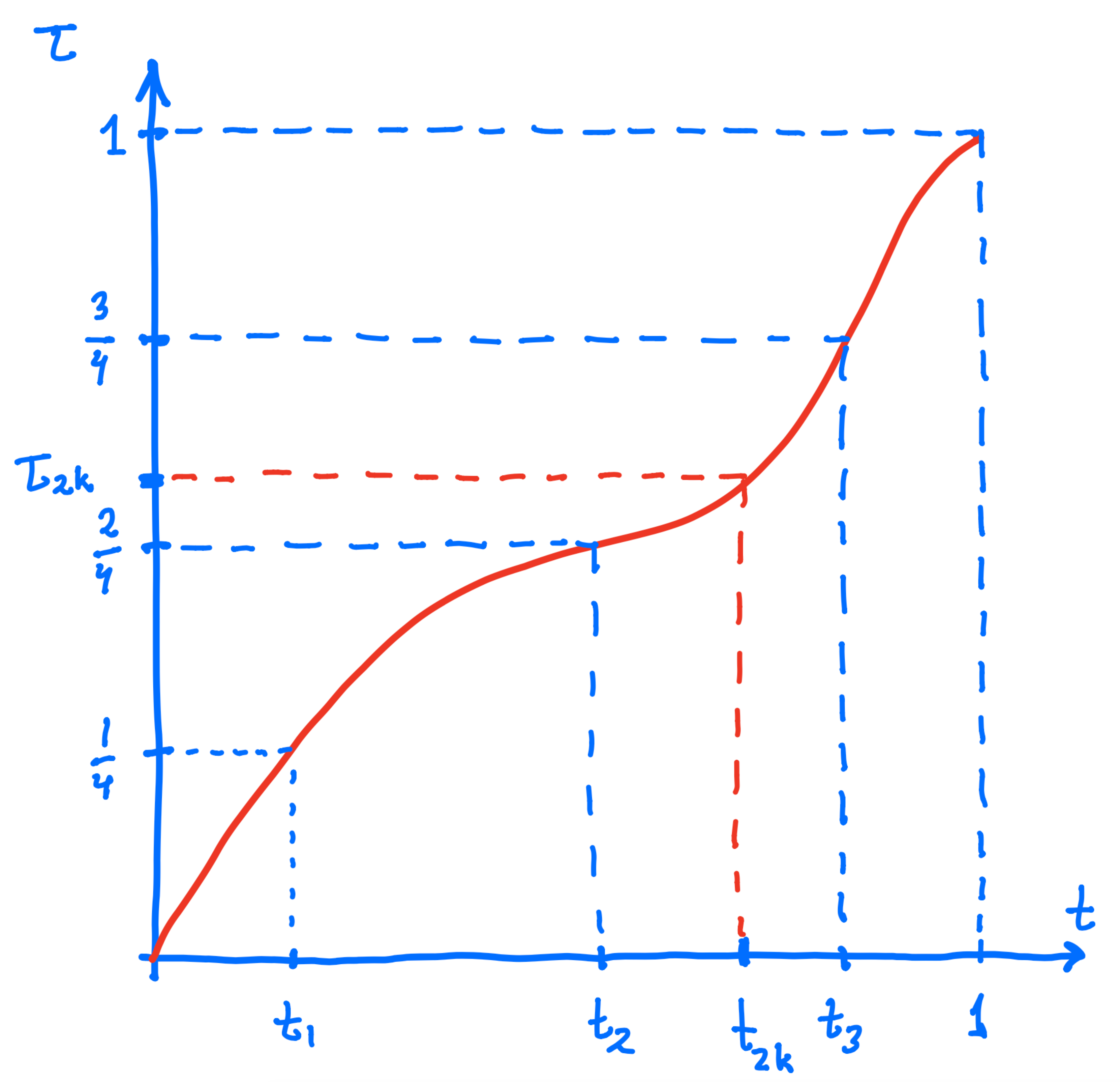}
\caption{Stratified sampling of $t_{\ell k}$ with four layers and the density proportional to $q$. The red curve is the graph  $Q:[0,1]\to[0,1]$.}\label{fig:tau-t}
\end{figure}

In order to state the main result with rigorous technical assumptions we select $h:\rset\to \rset$ to 
be a Schwartz function which is equal to one on $[-1,1]$, for example,
\[
h(z)=\left\{
\begin{array}{cl}
(1+e^{\frac{1}{1-z}})\, e^{-(1-z)^2} & \mbox{ for } z>1\,,\\
1 &\mbox{ for } |z|\le 1\,,\\
(1+e^{\frac{1}{1+z}})\, e^{-(1+z)^2} & \mbox{ for } z<-1\,.
\end{array}\right.
\]

\begin{theorem}\label{thm_main}
Assume that  the quantities 
\begin{equation*}
\begin{split}
&{F}:=\|f\|_{L^\infty(\rset^d)}\,,\\
&A  :=\int_{\rset^d}\frac{|\hat f(\omega)|^2}{\bar p'(\omega)} \rmD \omega\,,\\
&B' := {F}^{2}\int_{\rset}\frac{| \partial_\omega \hat h({F}\omega)|^2}{\bar p(\omega)} \rmD \omega\,,\\
&B := B' \exp\big(2\| \partial_z\big(zh(z/{F})\big)\|_{L^{\infty}(\rset)}\big)\, ,\\
&{F}\| \omega\partial_\omega \hat h({F}\omega)\|_{L^2(\rset)} +
\|\frac{| {F}\partial_\omega \hat h({F}\omega)|^2}{ \bar p(\omega)}\|_{L^{\infty}(\rset)}
+\| \partial^2_{z}\big(zh(z/{F})\big)\|_{L^{\infty}(\rset)}
+\|\frac{\hat f}{\bar p'}\|_{L^\infty(\rset^d)}\,, 
\end{split}
\end{equation*}
are bounded, ${\bar z^*}$ is an optimal solution to \VIZ{min_barz}, and $\GFO(x)$ an optimal solution to \VIZ{min_ls}.
Then there are positive constants $c$ and $C$ such that 
the generalization error satisfies
\begin{equation}\label{error_estimate}
\mathbb E_{t\omega}
\big[
\mathbb E_{xy}[|\bar z_L^*+\GFO(x) -y|^2]\big]
\le \frac{C}{KL} +\mathcal O\Big( \frac{1}{K^2} +\frac{1}{L^4}+ Le^{-cK} +\frac{\epsilon}{\sqrt{KL}}+\epsilon^2+\bar\delta\Big)\,.
\end{equation}
Furthermore, the minimal value of the constant $C$,
given by 
\begin{equation}\label{C1}
C= {B_*^2}(1+\log\frac{A_*}{B_*})^2\,,
\end{equation}
is obtained for the
optimal time-dependent densities, 
\begin{equation}\label{pp'}
\begin{split}
  p'(t,\omega) &=\bar p'(\omega)q'(t) = 
 \left\{
\begin{array}{cl}
 \frac{|\hat f(\omega)|}{\|\hat f\|_{L^1(\rset^d\times [0,t_*])}},\quad    
 & t<t_*:=\min(1,\frac{B_*}{A_*})\,,\\ 
   0,  & t\ge t_*\, ,\\
\end{array}\right. \\
 p(t,\omega) &=\bar p(\omega)q(t) = 
 \left\{
\begin{array}{cl}
 0,    & t<t_*\, ,  \\
   \frac{t^{-1}|\partial_\omega \hat h({F}\omega)|}{\|t^{-1}\partial_\omega\hat h({F}\cdot)\|_{L^1(\rset\times[t_*,1])}},\quad  & t\ge t_*\, ,\\
\end{array}\right. \\
 A_*&:= \|\hat f\|_{L^1(\rset^d)}\,,\\
   B_*&:= F\|\partial_\omega\hat h({F}\cdot)\|_{L^1(\rset)} \exp\big(\| \partial_z\big(zh(z/{F})\big)\|_{L^{\infty}(\rset)}\big)\, ,
\end{split}
\end{equation}
while if $q$ and $q'$ are constant the minimal constant is
\begin{equation}\label{C1*}
\begin{split}
C&=\min\big(2(AB)^{1/2}-B,A\big)\, .
\end{split}
\end{equation}
\end{theorem}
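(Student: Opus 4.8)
The plan is to regard the discrete residual network \VIZ{min_barz} as a Monte Carlo discretization of the infinite-dimensional optimal control problem \VIZ{min_z1}, whose explicit solution (to be established in Lemma~\ref{lemma_z}) provides the candidate controls $\hat b(t,\omega)$ and $\hat c(t',\omega')$. First I would split the output error
\[
\EXPECT_{xy}[|\bar z_L^*+\GFO(x)-y|^2]
\]
using $y=f(x)+\epsilon$ and the optimality of $\bar z_L^*$: since $\bar z_L^*$ minimizes the penalized objective over all admissible discrete controls, one may \emph{upper bound} it by plugging in the particular near-optimal choice $\bar b_{\ell k}\simeq \rLK \hat b(t_{\ell k},\omega_{\ell k})/p(t_{\ell k},\omega_{\ell k})$, $\bar c_{\ell k}\simeq \rLK \hat c(t'_{\ell k},\omega'_{\ell k})/p'(t'_{\ell k},\omega'_{\ell k})$ coming from the continuum solution. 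This reduces matters to estimating $\EXPECT_{t\omega}\EXPECT_x[|\tz_L - z_1|^2]$, i.e.\ the error between the discrete trajectory driven by the Monte Carlo controls and the continuum trajectory $z_1$ solving \VIZ{min_z1}, plus the intrinsic continuum cost $\EXPECT_x[|z_1-(f(x)-\GFO(x))|^2]$ (which the explicit solution makes small, of order the penalty $\bar\delta$) plus the noise contributions $\epsilon/\sqrt{KL}$ and $\epsilon^2$.

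The core of the argument is the trajectory-error estimate. Here I would use the standard device for error analysis of ODE/optimal-control discretizations (as in \cite{SDE_adapt}): write $|\tz_L-z_1|^2$ as a telescoping sum of one-step discrepancies along the exact path, and control each step by (i) the quadrature error of replacing $\int_{t_\ell}^{t_{\ell+1}}\int \hat b(t,\omega)e^{\IU\omega z_t}\,\rmD t\rmD\omega$ by its stratified Monte Carlo average, and (ii) the difference $e^{\IU\omega_{\ell k}\tz_\ell}-e^{\IU\omega_{\ell k}z_{t_{\ell k}}}$, handled by a Gronwall/Lipschitz argument whose constant is exactly $\exp(\|\partial_z(zh(z/F))\|_{L^\infty})$ — this is why that exponential appears in $B$ and $B_*$. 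The variance of the stratified Monte Carlo estimator contributes the leading $C/(KL)$ term: with $K$ samples per layer and $L$ layers the variance per layer is $O(1/K)$, summed in $L^2$ over layers with the $1/L$ time-step it becomes $O(1/(KL))$, and the constant is $\int |\hat b|^2/p + \int|\hat c|^2/p'$ type quantities, i.e.\ $A$ and $B$. The higher-order remainders $1/K^2$, $1/L^4$, $Le^{-cK}$ come respectively from fourth-moment/bias corrections in the Monte Carlo step, the Euler/quadrature consistency error in time, and a truncation/large-deviations estimate ensuring $\tz_\ell$ stays in a bounded region where $h\equiv 1$ (so that the Schwartz cutoff $h$ does not alter the dynamics with overwhelming probability).

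Finally, the optimization over the densities: given that the bound has the schematic form $\EXPECT_x|z_1-(f-\GFO)|^2 + \tfrac1{KL}(\text{weighted }A\text{-term over }[0,t_*] + \text{weighted }B\text{-term over }[t_*,1])$, one chooses $q'$ supported on $[0,t_*]$ and $q$ on $[t_*,1]$ (the two controls act on disjoint time intervals — the partition into an $x$-dependent part and a $z$-dependent part), then optimizes $\bar p',\bar p$ pointwise in $\omega$ by the same Cauchy–Schwarz argument as in Lemma~\ref{p_opt}, giving $\bar p'\propto|\hat f|$ and $\bar p\propto|\partial_\omega\hat h(F\cdot)|$, and optimizes the split time $t_*$; the calculus problem $\min_{t_*}\big(\tfrac{A_*}{t_*}\wedge\text{const} + \tfrac{B_*}{1-t_*}\big)$-type minimization yields $t_*=\min(1,B_*/A_*)$ and the closed form \VIZ{C1}, while the constant-$q,q'$ case collapses to the elementary $\min(2\sqrt{AB}-B,A)$ in \VIZ{C1*}. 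The main obstacle I expect is step (ii)–(iii): making the Gronwall argument rigorous requires a priori control of $|\tz_\ell|$ uniformly in the randomness, which is exactly what forces the cutoff $h$ and the somewhat delicate large-deviations term $Le^{-cK}$; keeping track of how the $L^\infty$-norm $F=\|f\|_{L^\infty}$ enters through $\hat h(F\omega)$ and its derivatives (so that the final constant $C$ scales favorably when $F\ll\|\hat f\|_{L^1}$) is where the bookkeeping is heaviest.
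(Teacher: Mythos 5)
Your plan is correct and follows essentially the same route as the paper: reduce via the minimality of $\bar z_L^*$ to the specific Monte Carlo controls induced by the explicit continuum solution of Lemma~\ref{lemma_z}, represent the trajectory error by telescoping along the exact path, control it by a Gronwall argument (with the Lipschitz constant $\exp(\|\partial_z(zh(z/F))\|_{L^\infty})$ entering $B$ and $B_*$) combined with a Chernoff bound producing the $Le^{-cK}$ term, and extract the leading $C/(KL)$ from the stratified Monte Carlo variance before optimizing the densities as in Lemmas~\ref{p_opt} and~\ref{MC_quad}. The only cosmetic discrepancy is your schematic for the $t_*$-optimization: the actual functional is $A_*t_*+B_*\log(1/t_*)$ (time-dependent case) and $A\tau+B(\tau^{-1}-1)$ (constant case), but you state the correct minimizers and closed forms.
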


\begin{remark}\label{rem_C}
{\rm
We note that Theorem \ref{thm_main}
shows that functions $f$ with $\|f\|_{L^\infty(\rset^d)}\ll \|\hat f\|_{L^1(\rset^d)}$ can be more accurately approximated by the deep residual network \VIZ{min_barz} as compared to the standard generalization error estimate for shallow
networks \VIZ{min_mean}. Namely,
for the case with time-dependent densities, we have that if $\|f\|_{L^\infty(\rset^d)}$ is sufficiently small compared to $\|\hat f\|_{L^1(\rset^d)}$ then
$\frac{B_*}{A_*} $ becomes small compared to one and we obtain $C \ll A_*^2$ by \VIZ{C1}; in addition the corresponding  error term $\frac{C}{KL}$ in \VIZ{error_estimate} dominates provided
\[
\begin{split}
&L\ll K\ll L^3\, ,\\
&\bar\delta+\epsilon^2\ll (KL)^{-1}\, .
\end{split}
\]
More precisely there holds for a positive constant $c'$ that 
$B_*=\mathcal O(\|f\|_{L^\infty}e^{c'\|f\|_{L^\infty}})$ and $A_*=\|\hat f\|_{L^1}$. For instance a regularized 
discontinuity  as $f(x)= e^{-|x|^2/2} \int_0^{x/a}\frac{\sin t}{t}{\rm d}t $ has
$\|f\|_{L^\infty}\ll \|\hat f\|_{L^1}$ for $0<a\ll 1$. 
Therefore there are functions $f$  such that the deep  residual neural networks \VIZ{min_barz} and \VIZ{min_barze} have a more accurate estimate \VIZ{error_estimate} as compared to the estimate \VIZ{min_mean} for the corresponding neural network with one hidden layer. }
\end{remark}

The optimal densities $\bar p'$ and $\bar p$ that minimize $A$ and $B'$, respectively, for the time-independent densities are
\begin{equation}\label{AB_p}
\begin{split}
    A&= \|\hat f\|^2_{L^1(\rset^d)} \ \mbox{for }\ \bar p'(\omega) = \frac{|\hat f(\omega)|}{\|\hat f\|_{L^1(\rset^d)}}\, ,\\
    B'&=\|F\partial_\omega\hat h(F\omega)\|^2_{L^1(\rset)} \ \mbox{for }\ \bar p(\omega) = \frac{|\partial_\omega \hat h({F}\omega)|}{\|\partial_\omega\hat h({F}\cdot)\|_{L^1(\rset)}}\, ,\\
\end{split}
\end{equation}
as shown in Lemma~\ref{p_opt}, and 
we note when $\frac{B}{A} $ is sufficiently small then $C\simeq 2(AB)^{1/2}\ll A$ by \VIZ{C1*}, so that also deep residual networks with time-independent densities can provide better approximation error as compared to the generalization error for shallow networks \VIZ{min_mean}.
The optimal time-dependent densities are derived in Lemma \ref{MC_quad}. In the particular case when $t_*=1$ in \VIZ{pp'}, the density $q$ concentrates at $t=1$.
%

Analogous to the estimate \VIZ{min_mean} for one hidden layer, we obtain 
by Theorem~\ref{thm_main} a bound on the approximation error for the following minimization problem.
\begin{corollary}
Suppose that the assumptions in Theorem~\ref{thm_main} hold, then
the generalization error has the bound
\[
\min_{%
\substack{
(\omega_{\ell k},\omega'_{\ell k})\in\rset\times\rset^d\\
     \ell=0,\ldots, L-1,\\
     k=1,\ldots, K}
     }%
\mathbb E_{xy}\big[|\bar z_L^*+\GF(x) -y|^2\big]
\le \frac{C}{KL} +\mathcal O\Big( \frac{1}{K^2} +\frac{1}{L^4}+ \bar\delta+ Le^{-cK} +\epsilon^2+\frac{\epsilon}{(KL)^{1/2}}\Big)
\]
where $C$ satisfies \VIZ{C1}.
\end{corollary}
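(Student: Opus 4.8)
The plan is to obtain the corollary from Theorem~\ref{thm_main} by the same elementary device that turned \VIZ{one_layer} into \VIZ{min_mean}: an infimum over a parameter set never exceeds the average of the same quantity over any probability distribution supported on that set.

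First I would set notation. For a realization of the random frequencies and stratified times $(t_{\ell k},\omega_{\ell k})$, $(t'_{\ell k},\omega'_{\ell k})$, let $\bar z_L^*$ be an optimal solution of \VIZ{min_barz} and $\GFO$ the optimal solution of \VIZ{min_ls}, and collect the sampled frequencies into a single symbol $\omega$; write
\[
G(\omega):=\mathbb E_{xy}\big[|\bar z_L^*+\GFO(x)-y|^2\big]
\]
for the generalization error of the resulting network as a function of $\omega$. Reading $\GF$ in the corollary as the optimal single-layer fit $\GFO$ --- the interpretation consistent with the way $\bar z_L^*$ is defined through \VIZ{min_barz} --- the left-hand side of the corollary is exactly $\min_\omega G(\omega)$. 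Even if one prefers to jointly minimize over \emph{all} amplitudes, including those in $\GF$, for fixed frequencies, the staged pair $(\bar z_L^*,\GFO)$ remains feasible and attains $G(\omega)$, so the jointly optimized value is still $\le\min_\omega G(\omega)$; hence the bound below applies in either reading.

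Next I would specialize the sampling densities in Theorem~\ref{thm_main} to the optimal time-dependent densities \VIZ{pp'}, for which the constant attains the value \VIZ{C1}, and sample $\omega$ together with the stratified times exactly as described just before the theorem. Since a minimum is bounded by the corresponding mean,
\[
\min_\omega G(\omega)\le \mathbb E_{t\omega}\big[G(\omega)\big]
=\mathbb E_{t\omega}\big[\mathbb E_{xy}[|\bar z_L^*+\GFO(x)-y|^2]\big]\,,
\]
and by Theorem~\ref{thm_main} the right-hand side is at most $\tfrac{C}{KL}+\mathcal O(K^{-2}+L^{-4}+\bar\delta+Le^{-cK}+\epsilon^2+\epsilon(KL)^{-1/2})$ with $C$ satisfying \VIZ{C1}. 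Concatenating the two displays gives the stated estimate; running the same argument with constant $q,q'$ yields the time-independent variant with $C$ as in \VIZ{C1*}.

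There is no real obstacle here beyond Theorem~\ref{thm_main} itself: the only points to check are that the staged optimization \VIZ{min_ls}--\VIZ{min_barz} cannot beat a joint optimization of all amplitudes at fixed frequencies, and that the densities \VIZ{pp'} are admissible (nonnegative and normalized), both immediate. All the quantitative content --- the Monte Carlo variance bounds of Section~\ref{sec_1-2}, the explicit optimal control solution of \VIZ{min_z1} in Section~\ref{sec_2}, and the optimization producing $C$ --- already lives inside the proof of Theorem~\ref{thm_main}.
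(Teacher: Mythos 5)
Your proposal is correct and follows exactly the route the paper intends: the corollary is obtained from Theorem~\ref{thm_main} by the same ``minimum is bounded by the mean over the frequency distribution'' device that turns \VIZ{one_layer} into \VIZ{min_mean}, with the frequencies sampled from the optimal densities \VIZ{pp'}. The paper gives no further detail, and your additional remark that the staged pair $(\bar z_L^*,\GFO)$ is feasible for any joint minimization at fixed frequencies correctly disposes of the only potential ambiguity in reading the left-hand side.
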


The main idea in the proof is to use a Monte Carlo approximation of the corresponding optimal control problem with an infinite number of layers $L$ and nodes $K$. 
The problem with an infinite number of nodes is solved explicitly in Section \ref{sec_2} and shows that the state, corresponding to $\bar z_\ell$, for each data point $x$ is a linear function in the levels from the initial state to the final state $y(x)$. This simple linear dependence makes it possible to split the Monte Carlo approximation $\RE \sum_{k=1}^{{K}} \bar b_{\ell k}e^{\IU \,  \omega_{\ell k} {\bar z}_\ell} +\RE \sum_{k=1}^{{K}} \bar c_{\ell k}e^{\IU\omega'_{\ell k}\cdot x}$, where the first term approximates a scaled identity map and the second term approximates the data $y(x)$. The variances of these two Monte Carlo approximations are optimized by the sampling densities \VIZ{pp'} and yield smaller error
than the one hidden layer estimate \VIZ{min_mean} in the case $\|f\|_{L^\infty(\rset^d)} \ll \|\hat f\|_{L^1(\rset^d)}$.

To use the approximation result of the theorem in practise requires to
sample roughly from the optimal densities. In Section~\ref{sec_alg} we present 
an explicit layer by layer approximation of \VIZ{min_barz} combined
with an adaptive Metropolis method that approximately samples 
the frequencies optimally, following \cite{ARFM}. 


\section{Random feature Monte Carlo approximation}\label{sec_1-2}
The proof of Theorem \ref{thm_main} is based on Monte Carlo approximation of a solution to an optimal control problem with an infinite number of layers and nodes. The basic case of approximation with one hidden layer is studied in this section.

By setting $\bar b_{\ell k}=0$ in \VIZ{min_barz} or \VIZ{min_barze} we obtain a random Fourier feature network with one hidden layer
\begin{equation*}\label{min_one}
\begin{split}
&\min_{\tiny{\begin{array}{c}
\bar c_{\ell k}\in \cset\\ \ell=1,\ldots, L-1\\ k=1,\ldots,{{K}}
\end{array}}}
\mathbb E_{xy}[|{\bar z}_L-(y-\GF(x))|^2]   
\, ,\\
&\mbox{subject to }\;\;\; {\bar z}_{L} =  
\RE \sum_{\ell=1}^{L-1}\sum_{k=1}^{{K}} \bar c_{\ell k}e^{\IU\omega'_{\ell k}\cdot x}
\, .\\
\end{split}
\end{equation*}
The generalization error \VIZ{one_layer} and the optimal choice of the density, namely $\bar p'=|\hat f|/\|\hat f\|_{L^1(\rset^d)}$ in \VIZ{AB_p}, follows by Monte Carlo approximation of the Fourier representation
\[
f(x)=\int_{\rset^d}\hat f(\omega)e^{\IU\omega\cdot x} \rmD x
\]
using $\bar c_{\ell k}=\hat f(\omega'_{\ell k})/\big(KL\bar p'(\omega'_{\ell k})\big)$ and the following two lemmas. To obtain a corresponding representation for a deep residual network requires more work, using an optimal control problem for infinite number of nodes and layers presented in Section \ref{sec_2} and an error representation in Section \ref{sec_proof}, where an analogous  Monte Carlo approximation is applied.

\begin{lemma}[Monte Carlo quadrature error]\label{MC}Assume that $\omega_j, \, j=1,\ldots, J,$ are independent identically distributed with density $p:\rset^d\to [0,\infty)$. Then the mean, variance and kurtosis for the Monte Carlo approximation $\rJ \sum_{j=1}^J \frac{a(\omega_j)}{p(\omega_j)}$ satisfy
\[
\begin{split}
\mathbb E[\sum_{j=1}^J \frac{a(\omega_j)}{Jp(\omega_j)}] &= \int_{\rset^d} a(\omega) \rmD \omega\, ,\\
\mathbb E[|\sum_{j=1}^J \frac{a(\omega_j)}{Jp(\omega_j)} - \int_{\rset^d} a(\omega) \rmD \omega|^2]&=J^{-1}\Big(
\int_{\rset^d}\frac{|a(\omega)|^2}{p(\omega)} \rmD \omega - \big( \int_{\rset^d} a(\omega) \rmD \omega\big)^2\Big)\, ,\\
\mathbb E[|\sum_{j=1}^J \frac{a(\omega_j)}{Jp(\omega_j)} - \int_{\rset^d} a(\omega) \rmD \omega|^4]&=J^{-2}\Big(
\int_{\rset^d}\frac{|a(\omega)|^2}{p(\omega)} \rmD \omega - \big( \int_{\rset^d} a(\omega) \rmD \omega\big)^2\Big)^2\\
&\quad + J^{-3} \int_{\rset^d}\big(\frac{a(\omega)}{p(\omega)}-\int_{\rset^d} a(\omega') \rmD \omega'\big)^4 p(\omega) \rmD \omega\,.
\end{split}
\]
\end{lemma}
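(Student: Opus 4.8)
The plan is to reduce everything to a routine moment computation for a sum of independent, identically distributed random variables. Introduce the shorthand $X_j := a(\omega_j)/p(\omega_j)$, so that the estimator is $\rJ\sum_{j=1}^J X_j$. The elementary change-of-variables identity $\mathbb E[g(\omega_j)/p(\omega_j)] = \int_{\rset^d} g(\omega)\,\rmD\omega$, valid for any $p$-integrable $g$ (reading $0/0$ as $0$ where $p$ vanishes), immediately gives the first assertion, $\mathbb E[\rJ\sum_j X_j] = \rJ\sum_j \int_{\rset^d} a = \int_{\rset^d} a(\omega)\,\rmD\omega$. Applied with $g = |a|^2/p$ it also identifies $\mathbb E[|X_1|^2] = \int_{\rset^d}|a(\omega)|^2/p(\omega)\,\rmD\omega$, and applied with $g = (a/p - \int a)^4\,p$ it identifies $\mathbb E[(X_1-\int a)^4]$ with the last integral in the statement; implicitly one assumes $a/p\in L^2(p)\cap L^4(p)$ so that these quantities are finite.

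Next, set $Y_j := X_j - \int_{\rset^d} a(\omega)\,\rmD\omega$, which are i.i.d.\ with mean zero. For the variance I would expand $|\rJ\sum_j Y_j|^2 = J^{-2}\sum_{i,j} Y_i\overline{Y_j}$ (simply $Y_iY_j$ in the real-valued case) and take the expectation: by independence and $\mathbb E[Y_j]=0$ every off-diagonal term with $i\ne j$ vanishes, leaving $J^{-2}\sum_j \mathbb E[|Y_j|^2] = J^{-1}\mathbb E[|Y_1|^2] = J^{-1}\big(\mathbb E[|X_1|^2] - (\int_{\rset^d} a)^2\big)$, which is precisely the stated variance identity.

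For the fourth moment I would expand $\big(\rJ\sum_j Y_j\big)^4 = J^{-4}\sum_{i,j,k,l} Y_iY_jY_kY_l$ and classify the multi-indices $(i,j,k,l)$ according to which entries coincide. By independence, any multi-index in which some value occurs exactly once carries a factor $\mathbb E[Y]=0$ and contributes nothing; hence only two patterns survive, namely all four indices equal ($J$ such tuples, each contributing $\mathbb E[Y_1^4]$) and two distinct values each occurring twice ($3J(J-1)$ such tuples, each contributing $(\mathbb E[Y_1^2])^2$, the factor $3$ counting the partitions of four positions into two pairs). Dividing the resulting $J\,\mathbb E[Y_1^4]+3J(J-1)(\mathbb E[Y_1^2])^2$ by $J^4$ and substituting the integral expressions for $\mathbb E[Y_1^2]$ and $\mathbb E[Y_1^4]$ obtained above yields the claimed kurtosis formula.

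There is no genuine obstacle here; the only point requiring care is the combinatorial bookkeeping in the fourth-moment expansion — correctly enumerating which index-coincidence patterns survive the mean-zero/independence reduction, and counting the two-pair pattern with the right multiplicity — together with recording the implicit integrability hypotheses ($a/p$ square- and fourth-power integrable against $p$) under which all the right-hand sides are finite.
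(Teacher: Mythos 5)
Your argument is the same one the paper uses: compute the mean by the change-of-variables identity, expand the square into a double sum and kill the off-diagonal terms by independence and mean zero. The paper writes out only the mean and the variance and dismisses the fourth moment with ``obtained similarly,'' so for those two identities your proof and the paper's coincide.

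Your explicit fourth-moment computation, however, is worth dwelling on, because it is correct and it does \emph{not} reproduce the displayed identity in the lemma. With $Y_j:=a(\omega_j)/p(\omega_j)-\int_{\rset^d}a$, your (correct) enumeration gives $J$ all-equal tuples and $3J(J-1)$ two-pair tuples, hence
\[
\mathbb E\Big[\big|\rJ\sum_{j=1}^J Y_j\big|^4\Big]=\frac{3(J-1)}{J^{3}}\big(\mathbb E[Y_1^2]\big)^2+\frac{1}{J^{3}}\,\mathbb E[Y_1^4]\,,
\]
whereas the lemma asserts the coefficient of $\big(\mathbb E[Y_1^2]\big)^2$ is $J^{-2}$. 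Since $3(J-1)/J^3=J^{-2}$ only for $J=3/2$, the stated ``equality'' is really only an equivalence up to a constant factor (it is used downstream solely to conclude $\mathbb E[|\xi|^4]=\mathcal O(K^{-2})$, for which the constant is irrelevant). So treat the third display of the lemma as a bound of the right order rather than an identity; your bookkeeping, not the displayed formula, is the exact statement. The only other point to record, as you note, is the integrability of $a/p$ in $L^2(p)\cap L^4(p)$, which the paper also leaves implicit.
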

\begin{proof}The proof is well known, and is included here for completeness.
We have  the expected value
\[
\mathbb E_\omega
[\sum_{j=1}^J \frac{a(\omega_j)}{Jp(\omega_j)}] 
= \int_{\rset^{Jd}}\sum_{j=1}^J \frac{a(\omega_j)}{Jp(\omega_j)}\prod_{k=1}^J p(\omega_k) \rmD \omega_k
=\int_{\rset^d} a(\omega) \rmD \omega\, .\\
\]
The variance of this Monte Carlo approximation satisfies
\begin{equation}\label{variance_f}
\begin{split}
&\mathbb E[|\sum_{j=1}^J \frac{a(\omega_j)}{Jp(\omega_j)} - \int_{\rset^d} a(\omega) \rmD \omega|^2]\\
&=\int_{\rset^{Jd}}\big(\sum_{j=1}^J \frac{a(\omega_j)}{Jp(\omega_j)}- \int_{\rset^d} a(\omega) \rmD \omega\big)^2  \prod_{k=1}^J p(\omega_k) \rmD \omega_k\\
&=J^{-2} \int_{\rset^{Jd}}\sum_{j=1}^J \sum_{i=1}^J
\big(\frac{a(\omega_j)}{p(\omega_j)}- \int_{\rset^d} a(\omega) \rmD \omega\big)
\big(\frac{a(\omega_i)}{p(\omega_i)}- \int_{\rset^d} a(\omega) \rmD \omega\big) 
 \prod_{k=1}^Jp(\omega_k) \rmD \omega_k\\
&=J^{-2} \int_{\rset^{Jd}}\sum_{j=1}^J 
\big(\frac{a(\omega_j)}{p(\omega_j)}- \int_{\rset^d} a(\omega) \rmD \omega\big)^2  \prod_{k=1}^Jp(\omega_k) \rmD \omega_k\\
&=J^{-1}\Big(
\int_{\rset^d}\frac{|a(\omega)|^2}{p(\omega)} \rmD \omega - \big( \int_{\rset^d} a(\omega) \rmD \omega\big)^2\Big)\, .
\end{split}
\end{equation}
The estimate of the fourth moment is obtained similarly.
\end{proof}

The following lemma is a classical result in optimal importance sampling.
\begin{lemma}[Optimal importance sampling]\label{p_opt} The optimal probability density 
\begin{equation} \label{eq:optimal_density}
p_*(\omega)= \frac{ |g(\omega)|}{\int_{\rset^d}|g(\omega')| \rmD \omega'}
\,,
\end{equation}
is the solution of the minimization problem
\begin{equation}\label{MC_loss222}
\min_{\substack{p\ge 0,\\ \int_{\R^d} p(\omega)\rmD\omega = 1}} 
\int_{\rset^d}\frac{|g(\omega)|^2}{p(\omega)}
\rmD \omega\,.
\end{equation}
With the choice \VIZ{eq:optimal_density}, for $a(\omega)=g(\omega)$, the Monte Carlo quadrature error becomes
\[
\mathbb E[|\sum_{j=1}^J \frac{a(\omega_j)}{Jp_*(\omega_j)} - \int_{\rset^d} a(\omega) \rmD \omega|^2]=J^{-1}\Big(
(\int_{\rset^d}{|a(\omega)|} \rmD \omega)^2 - \big( \int_{\rset^d} a(\omega) \rmD \omega\big)^2\Big)\, .\\
\]
\end{lemma}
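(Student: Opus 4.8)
The statement is a classical fact, and the plan is to prove the variational characterization by a single application of the Cauchy--Schwarz inequality and then to read off the Monte Carlo error directly from Lemma~\ref{MC}. Throughout one restricts attention to densities $p$ that are strictly positive on the support of $g$, since otherwise the integrand $|g|^2/p$ fails to be integrable and the objective in \VIZ{MC_loss222} equals $+\infty$; such $p$ can therefore be discarded at the outset.

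First I would establish the lower bound. For any admissible $p$,
\[
\Big(\int_{\rset^d} |g(\omega)|\,\rmD\omega\Big)^2
= \Big(\int_{\rset^d} \frac{|g(\omega)|}{\sqrt{p(\omega)}}\,\sqrt{p(\omega)}\;\rmD\omega\Big)^2
\le \Big(\int_{\rset^d}\frac{|g(\omega)|^2}{p(\omega)}\,\rmD\omega\Big)\Big(\int_{\rset^d} p(\omega)\,\rmD\omega\Big)
= \int_{\rset^d}\frac{|g(\omega)|^2}{p(\omega)}\,\rmD\omega\,,
\]
where the last equality uses $\int_{\rset^d} p = 1$. Hence the infimum in \VIZ{MC_loss222} is bounded below by $\big(\int_{\rset^d}|g|\big)^2$. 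Next I would check that the candidate $p_*$ in \VIZ{eq:optimal_density} is admissible --- it is nonnegative and normalized --- and compute directly $\int_{\rset^d} |g|^2/p_* = \big(\int_{\rset^d}|g|\big)\int_{\rset^d}|g| = \big(\int_{\rset^d}|g|\big)^2$, so the lower bound is attained and $p_*$ is a minimizer. Uniqueness up to null sets follows from the equality case of Cauchy--Schwarz, which forces $p$ to be proportional to $|g|$.

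Finally, for the quadrature error I would substitute $a=g$ and $p=p_*$ into the variance identity of Lemma~\ref{MC}, obtaining
\[
\mathbb E\Big[\Big|\sum_{j=1}^J \frac{a(\omega_j)}{Jp_*(\omega_j)} - \int_{\rset^d} a(\omega)\,\rmD\omega\Big|^2\Big]
= J^{-1}\Big(\int_{\rset^d}\frac{|g(\omega)|^2}{p_*(\omega)}\,\rmD\omega - \big(\int_{\rset^d} g(\omega)\,\rmD\omega\big)^2\Big)
= J^{-1}\Big(\big(\int_{\rset^d}|a(\omega)|\,\rmD\omega\big)^2 - \big(\int_{\rset^d} a(\omega)\,\rmD\omega\big)^2\Big)\,,
\]
which is exactly the claimed expression. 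There is essentially no obstacle in this argument; the only point requiring a word of care --- and the closest thing to a subtlety --- is the well-posedness of the objective functional for densities that vanish on part of the support of $g$, which is handled by the remark above that such densities render the objective infinite and so never compete for the minimum.
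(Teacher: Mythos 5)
Your proof is correct, and it takes a genuinely different route from the paper's. The paper proves the lemma by a calculus-of-variations computation: it writes $p=q/\int q$, perturbs $q$ in an arbitrary direction $v$, computes the first variation $H'(0)$ of the objective, and reads off from the stationarity condition $H'(0)=0$ that $q\propto|g|$. You instead prove a global lower bound $\int|g|^2/p\ge\big(\int|g|\big)^2$ for every admissible $p$ via Cauchy--Schwarz applied to $|g|/\sqrt{p}\cdot\sqrt{p}$, and then verify that $p_*$ attains it. Your argument buys something the paper's does not: the Euler--Lagrange computation only identifies a critical point of the functional and does not by itself certify that this stationary point is a global minimum, whereas the Cauchy--Schwarz bound establishes global optimality directly, and its equality case even gives uniqueness up to null sets. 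You also handle the degenerate case where $p$ vanishes on part of the support of $g$, which the paper passes over. What the paper's variational approach buys in exchange is that it \emph{derives} the form of the optimizer rather than verifying a guessed candidate, and the same perturbation technique is reused later in the paper (in the minimization of $\bar{\mathcal E}(\gamma)$ in Lemma~\ref{MC_quad}), so it serves a pedagogical purpose there. Your final step --- substituting $a=g$ and $p=p_*$ into the variance identity of Lemma~\ref{MC} --- matches what the paper intends.
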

\begin{proof}
The change of variables $p(\omega)=q(\omega)/\int_{\rset^d}q(\omega)\rmD \omega$
implies $\int_{\rset^d}p(\omega) \rmD \omega=1$, for any $q:\rset^d\to[0,\infty)$. We define for any $v:\rset^d\to \rset$ and $\varepsilon$ close to zero
\[
H(\varepsilon):=%
\int_{\rset^d}\frac{|g(\omega)|^2}{q(\omega)+\varepsilon v(\omega)}
\rmD \omega \int_{\rset^d}q(\omega)+\varepsilon v(\omega) \rmD \omega\,.
\]
At the optimum we have
\[
\begin{split}
H'(0)
&= \int_{\rset^d}\frac{|g(\omega)|^2v(\omega)}{-q^2(\omega)}\rmD \omega 
\underbrace{\int_{\rset^d}q(\omega') \rmD \omega' }_{=:c_1}
 + \underbrace{\int_{\rset^d}\frac{|g(\omega')|^2}{q(\omega')} \rmD \omega'}_{=:c_2}
\int_{\rset^d}v(\omega) \rmD \omega\\
&=
\int_{\rset^d}\big(c_2-c_1\frac{|g(\omega)|^2}{q^2(\omega)}\big)v(\omega) \rmD \omega 
\end{split}
\]
and the optimality condition 
$H'(0)=0$ implies
$q(\omega)=\sqrt{\frac{c_1}{c_2}} \, |g(\omega)|
$.
Consequently the optimal density becomes
\begin{equation*} \label{eq:optimal_dens}
p_*(\omega)= \frac{ |g(\omega)|}{\int_{\rset^d}|g(\omega')| \rmD \omega'}
\,.
\end{equation*}
\end{proof}

\section{An optimal control solution}\label{sec_2}
In this section we motivate \VIZ{C1} and \VIZ{pp'} using a solution to an optimal control problem related to \VIZ{min_barz} with an infinite number of nodes  and layers. The complete proof is  presented in Section~\ref{sec_proof}.

In the limit of infinite $L$ and $K$ with 
\[
\bar b_{\ell k}\simeq \frac{\hat b(t_{\ell k}, \omega_{\ell k})}{KL p(t_{\ell k},\omega_{\ell k})}
\mbox{ and } \bar c_{\ell k}\simeq\frac{ \hat c(t'_{\ell k},\omega'_{\ell k})}{KL p'(t'_{\ell k},\omega'_{\ell k})}
\]
 the deep residual neural network problem \VIZ{min_barz}, with the scaling $\delta=\bar\delta$, becomes the optimal control problem
\begin{equation}\label{min_z}
\begin{split}
&\min_{\substack{%
     \hat b:[0,1]\times\rset\to\cset\\
     \hat c:[0,1]\times\rset^d\to \cset
     }}
\mathbb E_{xy}\Big[| z_1-\big(y-\GF(x)\big)|^2 + \delta\int_0^1 |\alpha(t,z_t;x)|^2 \rmD t\Big]
\, ,\\
&\mbox{subject to } \\
& \frac{ \rmD  z_t}{ \rmD t} = \alpha(t,z_t;x)\,,\\
 &z_0 = 0\,.
\end{split}
\end{equation}
We recall the definition of $\alpha(t,z_t;x)$:
\[
\alpha(t,z_t;x)=\RE \int_{\rset} \hat b(t,\omega)e^{\IU\omega\cdot z_t} \rmD \omega 
+\RE \int_{\rset^d} \hat c(t,\omega)e^{\IU\omega\cdot x} \rmD \omega\,.
\]
The motivation for the transformation with $\beta$,  where $z_0=0$ and $z_1\simeq y-\GF(x)$, is that our proof requires in \eqref{beta_zero} $\frac{ \rmD  z_t}{ \rmD t}$ to be small compared to one. By choosing $\GF(x)$ to be a neural network approximation of $f(x)$, i.e., $\GFO(x)$, the difference $y-\GFO(x)$ becomes sufficiently small. 

The noiseless version of \VIZ{min_z} is the optimal control problem 
\begin{equation}\label{alfa}
\begin{split}
&\min_{\alpha:[0,1]\times\rset\times\rset^d\to\rset} \mathbb E_{x}\Big[|\tz_1-\big(f(x)-\GF(x)\big)|^2 +{\delta}\int_0^1 |\alpha(t,\tz_t;x)|^2 \rmD  t\Big]\\
&\mbox{subject to }\\
&\frac{ \rmD \tz_t}{ \rmD t} = \alpha(t,\tz_t;x)\,,\quad t>0\,,\\
&\tz_0=0\,.
\end{split}
\end{equation}
The optimal control problem \VIZ{alfa} can be solved explicitly as described in Lemma~\ref{lemma_z}. This explicit solution is used to derive the bounds \VIZ{C1} and \VIZ{pp'} as follows. The error estimate in Theorem~\ref{thm_main} is based on an estimate of the difference $\bar z_L-\tz_1$, using the optimal
solution, $\tz$, to \VIZ{alfa} with the optimal control written as
\begin{equation}\label{alfa_z}
\alpha(t,\tz_t;x)=\RE \int_{\rset} \hat b(t,\omega)e^{\IU\omega\cdot \tz_t} \rmD \omega 
+\RE \int_{\rset^d} \hat c(t,\omega)e^{\IU\omega\cdot x} \rmD \omega\,, 
\end{equation}
which includes dependence on both $\tz_t$ and $x$. This function $\tz$ is a feasible solution $z$ to \VIZ{min_z}, since it satisfies the differential equation constraint in \VIZ{min_z}. Its relation to the deep residual neural network problems \VIZ{min_barz} and \VIZ{min_barze} is through Monte Carlo quadrature of the integrals, specifically
\begin{equation}\label{MCalfa_z}
\RE \sum_{k=1}^{{K}} \frac{1}{KL}\frac{\hat b_{\ell k}e^{\IU\omega_{\ell k}\cdot \tz_\ell}}{p(t_{\ell k},\omega_{\ell k})}
+\RE \sum_{k=1}^{{K}} \frac{1}{KL} 
\frac{\hat c_{\ell k}e^{\IU\omega'_{\ell k}\cdot x}}{p'(t'_{\ell k},\omega'_{\ell k})}\, 
\, ,
\end{equation}
where $\tz_\ell:=\tz(t_\ell)$ with $t_\ell=Q^{-1}(\frac{\ell}{L})$
and 
\begin{equation}\label{1bc_def}
\hat b_{\ell k}=\hat b(t_{\ell k},\omega_{\ell k}) \mbox{ and } \hat c_{\ell k}=\hat c(t'_{\ell k},\omega'_{\ell k})\,.\end{equation}
Below in Lemma~\ref{MC_quad} we estimate this Monte Carlo quadrature error. The other part of the error for the estimation in Theorem~\ref{thm_main}, including the difference of $\bar z_\ell$ and $\tz_t$, based on the values of $\hat b_{\ell k}$ and $\hat c_{\ell k}$ obtained from \VIZ{alfa_z}, is studied in Section~\ref{sec_proof}.

\begin{lemma}[Optimal pathwise solution]\label{lemma_z}
Given a penalization parameter $\delta>0$, and the optimal control problem
\begin{equation}\label{alfagen}
\begin{split}
&\min_{\CTRL:[0,1]\times\rset\times\rset^d\to\rset} \mathbb E_{x}\Big[|\tz_1-\big(f(x)-\GF(x)\big)|^2 +{\delta}\int_0^1 |\CTRL(t,\tz_t;x)|^2 \rmD  t\Big]\\
&\mbox{subject to }\\
&\frac{ \rmD \tz_t}{ \rmD t} = \CTRL(t,\tz_t;x)\,,\quad t>0\,,\\
&\tz_0=0\,.
\end{split}
\end{equation}
the optimal control solution to \VIZ{alfagen} is given by
\[
\CTRL(t,\tz;x) = \frac{f(x)-\GF(x)}{\delta +1} 
\]
with the optimal path given by a linear function from the 
initial data to the target value
\[
\begin{split}
\tz_t &=  
\frac{t}{1+\delta}\big(f(x)-\GF(x)\big)\, \, ,\\
\frac{ \rmD \tz_t}{ \rmD t} &= \frac{f(x)-\GF(x)}{1+\delta} = \frac{\tz_t}{t}\,,
\end{split}
\]
with the  value
\[
\mathbb E_{x}\Big[| \tz_1-f(x)|^2\Big] 
=\mathbb E_x\Big[|f(x)-\GF(x)|^2\Big]\frac{\delta^2}{(1+\delta )^2}\,.
\]
\end{lemma}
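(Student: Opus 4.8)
The plan is to solve the optimal control problem \VIZ{alfagen} explicitly by exploiting that, for each fixed data point $x$, the problem decouples into a one-dimensional deterministic control problem with a quadratic terminal cost and a quadratic running cost. Write $\xi := f(x)-\GF(x)$ for the (fixed) target value. Then for each $x$ we must minimize $|\tz_1-\xi|^2 + \delta\int_0^1|\dot{\tz}_t|^2\,\rmD t$ over all paths with $\tz_0=0$, where the control $\CTRL(t,\tz_t;x)$ is precisely $\dot{\tz}_t$. I would first argue that for a fixed terminal value $\tz_1=v$, the running-cost term $\int_0^1|\dot\tz_t|^2\,\rmD t$ is minimized, by Jensen's (or Cauchy--Schwarz) inequality, by the straight-line path $\tz_t = tv$, giving running cost $|v|^2$. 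Hence the problem reduces to the scalar minimization $\min_{v\in\rset}\big(|v-\xi|^2 + \delta|v|^2\big)$.

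Next I would solve this scalar quadratic: differentiating, $2(v-\xi) + 2\delta v = 0$, so $v^* = \xi/(1+\delta)$, and the straight-line path is $\tz_t = t\xi/(1+\delta) = t(f(x)-\GF(x))/(1+\delta)$, which immediately gives $\dot{\tz}_t = (f(x)-\GF(x))/(1+\delta) = \tz_t/t$ and the constant-in-$(t,\tz)$ control $\CTRL(t,\tz;x) = (f(x)-\GF(x))/(1+\delta)$. To verify this is admissible as a control of the required form, note a constant function of $x$ alone is a legitimate $\alpha:[0,1]\times\rset\times\rset^d\to\rset$. Finally I would compute the optimal value: the terminal error is $|\tz_1 - \xi|^2 = |\xi/(1+\delta) - \xi|^2 = \delta^2|\xi|^2/(1+\delta)^2$, and adding the running cost $\delta|v^*|^2 = \delta|\xi|^2/(1+\delta)^2$ gives total $\delta|\xi|^2/(1+\delta)$; but the stated value is $\mathbb{E}_x[|\tz_1 - f(x)|^2]$ (the terminal error against $f$, not the full cost), which is $\mathbb{E}_x[|f(x)-\GF(x)|^2]\,\delta^2/(1+\delta)^2$ as claimed. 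Taking $\mathbb{E}_x$ throughout — legitimate since the minimization is pointwise in $x$ — completes the argument.

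I do not anticipate a serious obstacle: the only subtlety worth stating carefully is the reduction to straight-line paths, i.e. that $\int_0^1|\dot\tz_t|^2\,\rmD t \ge |\tz_1-\tz_0|^2$ with equality iff $\dot\tz_t$ is constant, which is Cauchy--Schwarz $\big(\int_0^1 \dot\tz_t\,\rmD t\big)^2 \le \int_0^1 1\,\rmD t\cdot\int_0^1|\dot\tz_t|^2\,\rmD t$; and the observation that the pointwise-in-$x$ optimizer is itself a valid control function of the prescribed type, so pointwise optimization and the optimization over control functions agree. Everything else is a one-line scalar computation, so I would present the proof essentially as the two displays above together with a sentence justifying the pointwise decoupling and the straight-line reduction.
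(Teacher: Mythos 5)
Your proof is correct, but it takes a genuinely different route from the paper. The paper proves the lemma via the Pontryagin principle: it writes the Lagrangian, derives the adjoint equation $\frac{\rmD\lambda_t}{\rmD t}=0$ with $\lambda_1=2\big(\tz_1-(f(x)-\GF(x))\big)$, obtains the constant control $\CTRL_t=-\lambda_t/(2\delta)$, and then solves the resulting fixed-point relation for $\tz_1$. You instead argue directly: after the same pointwise-in-$x$ decoupling, you fix the terminal value $v$, use Cauchy--Schwarz to show the running cost $\int_0^1|\dot{\tz}_t|^2\,\rmD t$ is minimized exactly by the straight-line path with cost $|v|^2$, and then solve the scalar quadratic $\min_v\big(|v-\xi|^2+\delta|v|^2\big)$. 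Your argument is more elementary and arguably more rigorous as a standalone proof of optimality (the Cauchy--Schwarz step gives a genuine lower bound over all admissible paths, rather than a first-order necessary condition). What the paper's route buys is the adjoint variable $\lambda_t$ itself, which is constant in $t$ and is reused later in Section~\ref{sec_alg} (see \VIZ{lambda_delta}) to motivate the layer-by-layer training algorithm; your route does not produce this object, though it is not needed for the lemma. One small remark: the stated value $\mathbb E_x[|\tz_1-f(x)|^2]$ only equals $\mathbb E_x[|f(x)-\GF(x)|^2]\,\delta^2/(1+\delta)^2$ if it is read as $\mathbb E_x[|\tz_1-(f(x)-\GF(x))|^2]$ (equivalently, the error of the full output $\tz_1+\GF(x)$ against $f(x)$); your computation of $|\tz_1-\xi|^2$ is the one that matches the claimed right-hand side, so you have implicitly corrected what appears to be a typo in the statement.
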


\begin{proof}

While we prove the result for $\delta>0$ we note that the case $\delta=0$ can be treated by taking the limit 
$\delta\rightarrow 0$.
Since the control $\CTRL(t,\tz_t(x);x)$ is allowed to depend on $x$, the minimum in \VIZ{alfagen}  can be performed with respect to each data point $x$ individually, 
hence
\VIZ{alfagen} implies 
\[
\begin{split}
&\min_{\CTRL:[0,1]\times\rset\times\rset^d\to\rset} \Big\{|\tz_1-\big(f(x)-\GF(x)\big)|^2 +{\delta}\int_0^1 |\CTRL(s,\tz_s;x)|^2 \rmD  s\Big\}\\
&\mbox{subject to }\;\;\;\;
\frac{ \rmD \tz_t}{ \rmD t} = \CTRL(t,\tz_t;x)\,,\quad t>0\,,\\
&\tz_0=0\, .
\end{split}
\]
The Lagrangian for this problem becomes
\[
\mathcal L(\tz,\lambda,\CTRL):=  |\tz_1-\big(f(x)-\GF(x)\big)|^2 +{\delta}\int_0^1 |\CTRL_t|^2 \rmD  t + 
\int_0^1 \lambda_t\cdot (\CTRL_t-\frac{ \rmD \tz_t}{ \rmD t} ) \rmD t
\]
with  the corresponding Lagrange multiplier condition
\begin{equation}\label{lagrange_v}
\begin{split}
\frac{ \rmD \lambda_t}{ \rmD t} &= 0\, , \quad t<1\,,\\
\lambda_1 &= 2\Big(\tz_1-\big(f(x)-\GF(x)\big)\Big)\,,
\end{split}
\end{equation}
and the Pontryagin principle
$\CTRL_t = {\argmin}_{a\in\R} (\lambda_t\, a+{\delta}|a|^2)$.
The solution becomes
\[
\CTRL_t = - \frac{1}{\delta}\left(\tz_1-\big(f(x)-\GF(x)\big)\right) \,,
\]
and we obtain
\[
\tz_t =  - \frac{t}{\delta} \left(\tz_1-\big(f(x)-\GF(x)\big)\right) \,,
\]
and thus
\begin{equation}\label{z_1}
\tz_1 = \frac{f(x)-\GF(x)}{1+\delta }\,.
\end{equation}
Therefore we have
\[
\CTRL_t = \frac{f(x)-\GF(x)}{1+\delta} 
\]
and 
\[
\tz_t =  \frac{t}{1+\delta}\big(f(x)-\GF(x)\big)\,,\;\;\;
\frac{ \rmD \tz_t}{ \rmD t} = \frac{\big(f(x)-\GF(x)\big)}{1+\delta} = \frac{\tz_t}{t}\,,
\]
which concludes the proof.

\medskip

\end{proof}

Returning to the problem \eqref{alfa} and substituting 
the particular form of $\alpha$ for $\CTRL$ we have 
that a feasible  solution for \VIZ{min_z} is
\begin{equation}\label{bc_def}
\RE \int_{\rset} \hat b(t,\omega) e^{\IU\omega \tz_t} \rmD \omega + 
\RE \int_{\rset^d} \hat c(t,\omega) e^{\IU\omega\cdot x} \rmD \omega
= \frac{f(x)-\GF(x)}{\delta+1} = \frac{\tz_t}{t}\, .
\end{equation}

Due to the singularity in $\tz_t/t$, at $t=0$ in \VIZ{bc_def}, it is  advantageous, for small $t$, that the function  $\RE\int_{\rset^d} \hat c(t,\omega) e^{\IU\omega\cdot x} \rmD \omega$ approximates $\frac{f(x)-\GF(x)}{\delta+1} $ and $\hat b(t,\cdot)=0$. 
For larger $t$ it is then better to let
\[
\begin{split}
\RE \int_{\rset} \hat b(t,\omega) e^{\IU\omega\cdot \tz_t} \rmD \omega &= \frac{\tz_t}{t}\, .\\
\end{split}
\]
To minimize the Monte Carlo approximation variance, $\mathcal E$, of \VIZ{alfa_z} approximating  the integrals in \VIZ{bc_def} with the sum in \VIZ{MCalfa_z} we can  
choose between these two options using the following construction. We introduce $\gamma:[0,1]\to[0,1]$ and define the convex combination
\[
\RE \int_{\rset} \hat b(t,\omega) e^{\IU\omega \tz_t} \rmD \omega + 
\RE \int_{\rset^d} \hat c(t,\omega) e^{\IU\omega\cdot x} \rmD \omega
= \gamma(t)\frac{f(x)-\GF(x)}{\delta+1} +\big(1-\gamma(t)\big) \frac{\tz_t}{t}\, ,\\
\]
and set
\begin{equation}\label{b_z}
\begin{split}
\RE \int_{\rset} \hat b(t,\omega) e^{\IU\omega\cdot \tz_t} \rmD \omega &= \big(1-\gamma(t)\big)\frac{\tz_t}{t}\, ,\\
\RE \int_{\rset^d} \hat c(t,\omega) e^{\IU\omega\cdot x} \rmD \omega &= \gamma(t)\frac{f(x)-\GF(x)}{\delta+1}
\, ,
\end{split}
\end{equation}
which by Lemma~\ref{MC} implies
\begin{lemma}\label{MC_quad}
Assume $\GF(x)=\RE\sum_{k=1}^{{K}} \frac{\hat f(\omega'_{0 k})e^{\IU\omega'_{0 k}\cdot x}}{K\bar p'(\omega'_{0 k})}$
and let $\hat b_{\ell k}$ and $\hat c_{\ell k}$  be defined by \VIZ{1bc_def} and \VIZ{b_z}, for $k=1,\ldots,{{K}}$ and $\ell=1,\ldots,L-1$, then
\begin{equation}\label{A*B*}
\begin{split}
&\min_{\gamma:[0,1]\to[0,1]}\mathbb E_\omega\Big[\Big|\RE\sum_{\ell=1}^{L-1} \sum_{k=1}^{{K}} \frac{\hat b_{\ell k}e^{\IU\omega_{\ell k} \tz_\ell}}{KLp(t_{\ell k},\omega_{\ell k})}
+\RE \sum_{\ell=1}^{L-1}\sum_{k=1}^{{K}} \frac{\hat c_{\ell k}e^{\IU\omega'_{\ell k}\cdot x}}{KLp'(t_{\ell k},\omega'_{\ell k})}\\
&\quad -\sum_{\ell=1}^{L-1}
\int_{t_\ell}^{t_{\ell+1}}
\big(\RE \int_{\rset} \hat b(t,\omega) e^{\IU\omega \tz_\ell} \rmD \omega
+\RE \int_{\rset^d} \hat c(t,\omega) e^{\IU\omega\cdot x} \rmD \omega\big){\rm dt}\Big|^2\Big]\\
&\le \frac{B_*'^2}{KL}(1+\log\frac{A_*}{B_*'})^2
\end{split}
\end{equation}
for the
optimal time-dependent densities 
\begin{equation}\label{pp'*}
\begin{split}
  p'(t,\omega) &:=\bar p'(\omega)q'(t) = 
 \left\{
\begin{array}{cl}
 \frac{|\hat f(\omega)|}{\|\hat f\|_{L^1(\rset^d\times [0,t_*])}}, \quad    
 & t<t_*:=\min(1,\frac{B_*'}{A_*})\,,\\ 
   0,  & t\ge t_*\, ,\\
\end{array}\right. \\
 p(t,\omega) &:=\bar p(\omega)q(t) = 
 \left\{
\begin{array}{cl}
 0,    & t<t_*\, ,  \\
   \frac{t^{-1}|\partial_\omega \hat h({F}\omega)|}{\|t^{-1}\partial_\omega\hat h({F}\cdot)\|_{L^1(\rset\times[t_*,1])}},\quad  & t\ge t_*\, ,\\
\end{array}\right. \\
 A_*&:= \|\hat f\|_{L^1(\rset^d)}\,,\\
   B_*'&:= F\|\partial_\omega\hat h({F}\cdot)\|_{L^1(\rset)}\, .
\end{split}
\end{equation}
If $q$ and $q'$ are piecewise constant on $[0,1]$, the bound $\frac{B_*'^2}{KL}(1+\log\frac{A_*}{B_*'})^2$  in \VIZ{A*B*} is replaced by 
$$
\min\big(2(AB)^{1/2}-B,A\big)\,,\;\;\;\mbox{using $\gamma(t)=1_{[0,\min(1,\sqrt{B/A}]}(t)$.}
$$
\end{lemma}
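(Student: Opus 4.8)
The plan is to apply the variance formula from Lemma~\ref{MC} to the Monte Carlo sum in \VIZ{A*B*}, treating the $t$ and $\omega$ variables jointly, and then to optimize over the splitting parameter $\gamma$ and the sampling densities $p,p'$. First I would observe that, because of the stratified sampling construction, the sum over $\ell$ and $k$ is an unbiased Monte Carlo estimator of the integral $\int_0^1\!\!\int (\cdots)\,\rmD\omega\,\rmD t$, with each layer contributing an independent block; hence the total variance is the sum over layers of the per-layer variances, and Lemma~\ref{MC} gives each per-layer variance as $(KL^2)^{-1}$ times an $L^2$-over-$p$ integral (the $L$ extra factor because each layer's density is $Lq(t)$ on $[t_\ell,t_{\ell+1})$). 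Using the form \VIZ{b_z} for $\hat b$ and $\hat c$, the relevant integrands are $\bigl(1-\gamma(t)\bigr)\tz_t/t = \bigl(1-\gamma(t)\bigr)(f(x)-\GF(x))/(1+\delta)$ split into Fourier modes for the $\omega$-part, and $\gamma(t)(f(x)-\GF(x))/(1+\delta)$ for the $\omega'$-part. Here I would use that $\GF=\GFO$ makes $f(x)-\GF(x)$ small, so that $\tz_t$ stays in a bounded region and the factor $h(\tz_t/F)$ can be inserted for free (it equals one on the relevant range); this is what produces $\partial_\omega\hat h(F\omega)$ in $B'_*$ after writing $z = zh(z/F)$ and taking its Fourier transform.

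Next I would carry out the density optimization. For the $\omega$ and $\omega'$ integrals separately, Lemma~\ref{p_opt} (optimal importance sampling) gives that the optimal $\bar p'(\omega)\propto|\hat f(\omega)|$ turns $\int |\hat f|^2/\bar p'$ into $\|\hat f\|_{L^1}^2 = A_*^2$, and the optimal $\bar p(\omega)\propto|\partial_\omega\hat h(F\omega)|$ turns the $\omega$-integral into $\bigl(F\|\partial_\omega\hat h(F\cdot)\|_{L^1}\bigr)^2 = B'^2_*$ up to the $t$-dependent weight. After this the problem reduces to a one-dimensional variational problem in $t$: minimize over $\gamma:[0,1]\to[0,1]$ and $q,q'$ the quantity
\[
\int_0^1 \frac{\bigl(1-\gamma(t)\bigr)^2}{t^2}\frac{B'^2_*}{q(t)}\,\rmD t
+ \int_0^1 \gamma(t)^2\frac{A_*^2}{q'(t)}\,\rmD t\,,
\]
subject to $\int q = \int q' = 1$. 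I would optimize $q,q'$ first by Lemma~\ref{p_opt} applied pointwise in $t$, yielding $\bigl(\int_0^1 |1-\gamma(t)|\,t^{-1}\,\rmD t\bigr)^2 B'^2_* + \bigl(\int_0^1 \gamma(t)\,\rmD t\bigr)^2 A_*^2$; then the optimal $\gamma$ is a bang-bang indicator $\gamma = \mathbf 1_{[0,t_*]}$, because each term is a squared integral of a nonnegative function and the total is minimized by sending $\gamma$ to its extreme values, with the threshold $t_*$ balancing the two contributions — comparing $B'_*\log(1/t_*)$ against $A_* t_*$ and minimizing gives $t_* = \min(1, B'_*/A_*)$ and the value $B'^2_*(1+\log(A_*/B'_*))^2$. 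This is exactly \VIZ{A*B*} with the densities \VIZ{pp'*}.

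For the piecewise-constant case, the same reduction applies but $q,q'$ may not be redistributed freely in $t$, so the sharp Lemma~\ref{p_opt} bound is replaced by the cruder estimate: with $q,q'$ constant, the $t$-integrals become $\int_0^1 (1-\gamma)^2 t^{-2}\,\rmD t\,B'^2_* + \int_0^1 \gamma^2\,\rmD t\,A^2$ (now with the general $A,B$ rather than the optimal $A_*,B_*$), and choosing $\gamma = \mathbf 1_{[0,\sqrt{B/A}]}$ and optimizing gives $\min(2(AB)^{1/2}-B, A)$, where the second branch $A$ corresponds to $\gamma\equiv 0$ (the pure one-layer bound). The main obstacle I anticipate is the careful bookkeeping of the stratified-sampling variance — getting the powers of $L$ right across layers, and justifying that the cross terms between the $\hat b$-part and the $\hat c$-part (which sit on disjoint $t$-intervals once $\gamma$ is bang-bang, but not a priori) do not spoil the bound — together with the analytic estimate that inserting the cutoff $h(\tz_t/F)$ is harmless, which requires controlling $\|\tz_t\|_{L^\infty}$ uniformly via the boundedness of $f-\GFO$ and the exponential-in-$\|\partial_z(zh(z/F))\|_\infty$ Grönwall-type factor that appears in $B_*$ versus $B'_*$.
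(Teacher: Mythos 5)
Your proposal is correct and follows essentially the same route as the paper: Monte Carlo variance via Lemma~\ref{MC} with the stratified space–time densities, insertion of the cutoff $z\mapsto zh(z/F)$ to obtain $\partial_\omega\hat h(F\omega)$, importance-sampling optimization of $\bar p,\bar p'$ via Lemma~\ref{p_opt}, and a bang-bang minimization in $\gamma$ yielding $t_*=\min(1,B_*'/A_*)$, with the separate constant-$q$ computation giving $\min(2(AB)^{1/2}-B,A)$. The only cosmetic difference is that the paper bounds the sum of the two squared variances by the square of the sum before applying Euler--Lagrange (which is exactly minimized at $t_*=B_*'/A_*$ and gives the stated $B_*'^2(1+\log(A_*/B_*'))^2$), whereas you minimize the sum of squares directly; your resulting bound is no larger, so the conclusion is unaffected.
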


\begin{proof}
We note that since $\GF(x)$ is a given neural network function, replacing $c(t,x)$ by $c(t,x)+\GF(x)\frac{\gamma(t)}{1+\delta}$ does not contribute to the approximation error. 
To determine $\hat b$ from $\frac{\tz_t}{t}$
we use that 
the constant $F$ is by definition the bound
\[
{F}=\|f\|_{L^\infty(\rset^d)} \le \int_{\rset^d}|\hat f(\omega)| \rmD \omega\, 
\]
and since by assumption $h(z)=1$ for $|z|\le {1}$,
the map
$z\mapsto z h(z/{F})$ is the identity map, for $|z|\le {F}$, and its Fourier 
transform $\omega\mapsto -\IU\,F\partial_\omega \hat h({F}\omega)$ is a Schwartz function.
The variance of the Monte Carlo error
\[
\begin{split}
\mathcal E(\gamma)&:=
\mathbb E_\omega\big[\big|\RE\sum_{\ell=1}^{L-1} \sum_{k=1}^{{K}} \frac{\hat b_{\ell k}e^{\IU\omega_{\ell k}\cdot \tz_\ell}}{KLp(t_{\ell k},\omega_{\ell k})}
+\RE \sum_{k=1}^{{K'}} \frac{\hat c_{\ell k}e^{\IU\omega'_{\ell k}\cdot x}}{KLp'(t_{\ell k},\omega'_{\ell k})}\\
&\quad -\sum_{\ell=0}^{L-1}
\int_{t_\ell}^{t_{\ell+1}}
\big(\RE \int_{\rset} \hat b(t,\omega) e^{\IU\omega\cdot \tz_\ell} \rmD \omega
+\RE \int_{\rset^d} \hat c(t,\omega) e^{\IU\omega\cdot x} \rmD \omega\big){\rm dt}\big|^2\big]
\, ,
\end{split}\]
has, for optimal densities $p$ and $p'$ by Lemmas  \ref{MC} and \ref{lemma_z}, the bound
\begin{equation}\label{E_var}
\begin{split}
\mathcal E(\gamma)&\le \frac{1}{KL} \big((A_*\int_0^1 \gamma(t)   \rmD  t)^2 + (B_*'\int_0^1\frac{1-\gamma(t)}{t}  \rmD t)^2\big) \\
&\le \frac{1}{KL} \big(A_*\int_0^1 \gamma(t)   \rmD  t + B_*'\int_0^1\frac{1-\gamma(t)}{t}  \rmD t\big)^2
= :\bar{\mathcal E}(\gamma)
\end{split}
\end{equation}
where $\gamma:[0,1]\to [0,1]$.  
 The minimum of $\bar{\mathcal E}(\gamma)$ can be obtained by Euler-Lagrange minimization, namely for any continuous function $v:[0,1]\to \rset$ we have
\[
0=\frac{ \rmD }{ \rmD \mu}\bar{\mathcal E}(\gamma+\mu v)\big|_{\mu=0}
=\frac{2\big(\bar{\mathcal E}(\gamma)\big)^{1/2}}{(KL)^{1/2}} \int_0^1 \Big(A_*  v(t) 
- B_*' \frac{v(t)}{t} \Big) \rmD t
\]
which implies the bang-bang control \VIZ{pp'}, i.e. $\gamma(t)=1_{[0,t_*]}(t)$,
and \[\bar{\mathcal E}(\gamma)=(KL)^{-1}B_*'^2(1+\log\frac{A_*}{B_*'})^2\,.\]

If we assume that $q$ and $q'$ are constant on their support and use a bang-bang control, we obtain 
the minimization problem
\[
\begin{split} 
KL\bar{\mathcal E}(\gamma)\le\min_{\tau\in[0,1]}
(A\int_0^\tau  \rmD t+ \int_\tau^1 \frac{B}{t^2} \rmD t)=&
\min_{\tau\in[0,1]}\big(A\tau+B(\tau^{-1}-1)\big)\\
=&\min\big(2(AB)^{1/2}-B,A\big)
\end{split}
\]
for $\tau=\min(1,(B/A)^{1/2})$ and $\gamma(t)=1_{[0,\tau]}(t)$,
which explains \VIZ{C1*}. 
\end{proof}

It is advantageous for $\alpha$ to first approximate $f(x)-\GF(x)$ and
then after a  positive time  approximate $\tz_t/t$. If $\gamma_t=1$ for all $t$ we have a random Fourier feature network with one hidden layer and $\mathcal E\le (KL)^{-1}A$.

\section{Proof of Theorem \ref{thm_main}}\label{sec_proof}
\begin{proof}[Proof of Theorem \ref{thm_main}]
We denote 
a feasible solution to \VIZ{min_z} by $z(t)$ or $z_t$.
We define, for $\ell=0,\ldots, L$, the time layer values
\[
z_\ell := z(t_\ell)\,,\;\;\;\;\;\mbox{where}\;\;\;
t_\ell:=Q^{-1}\left(\frac{\ell}{L}\right)\,.
\]

To derive an error estimate we choose the discrete amplitudes to be the same as the amplitudes for the
feasible  $z=\tz$ derived in Lemma \ref{lemma_z}, 
using also the optimal partition \VIZ{b_z} with $\gamma(t)=1_{t<\min(1,B_*/A_*)}(t)$, 
for optimal time-dependent densities, and using $\gamma(t)=1_{t<\min(1,(B/A)^{1/2})}(t)$ for densities $q$ and $q'$ that are constant on their support.
Therefore we will use the amplitudes related to \VIZ{MCalfa_z} to define $\bar z$  from $\hat b_{\ell k}=\hat b(t_{\ell k},\omega_{\ell k})$  and
$\hat c_{\ell k}=\hat c(t_{\ell k},\omega_{\ell k})$, which are given in \VIZ{1bc_def} and \VIZ{bc_def} and defined by the solution $\tz=z$.
The aim is to estimate $\mathbb E_{xy}[|\bar z_L^*-\big(y-\GFO(x)\big)|^2]$, where $\bar z^*$ is an optimal solution to \VIZ{min_barz}.
The minimum property of $\bar z_L^*$ implies  
that for any positive ${\zeta}$ we have 
\[
\begin{split}
%
&\mathbb E_{t\omega}\big[\mathbb E_{xy}[|\bar z_L^*-\big(y-\GFO(x)\big)|^2]\big]\\
&\le \mathbb E_{t\omega}\big[\mathbb E_{xy}[|\bar z_L^*-\big(y-\GFO(x)\big)|^2
+\bar\delta L\sum_{\ell=1}^{L-1}|\bar z^*_{\ell+1}-\bar z^*_{\ell}|^2]\big]\\
&\le \mathbb E_{t\omega}\big[\mathbb E_{xy}[|\bar z_L-\big(y-\GFO(x)\big)|^2
+\bar\delta L\sum_{\ell=1}^{L-1}|\bar z_{\ell+1}-\bar z_{\ell}|^2]\big]\\
&=\mathbb E_{t\omega}\big[\mathbb E_{xy}[|(\bar z_L-z_1) +  (z_1-\tz_1)+\tz_1-(y-\GFO(x))|^2]\big] +\mathcal O(\bar\delta)\\
&\le (1+{\zeta}^2)\mathbb E_{t\omega}\big[\mathbb E_{xy}[| \bar z_L-z_1|^2]\big]\\
&\quad +(1+{\zeta}^{-2})\mathbb E_{t\omega}\big[\mathbb E_{xy}[| (z_1-\tz_1)+\tz_1-(y-\GFO(x))|^2]\big] +\mathcal O(\bar\delta)\,.\\
\end{split}
\]
By choosing $z=\tz$, Lemma \ref{lemma_z} yields
\[
\mathbb E_{xy}[| (z_1-\tz_1)+\tz_1-(y-\GFO(x))|^2]
\le 2\mathbb E_{xy}[|\tz_1-\big(f(x)-\GFO(x)\big)|^2]+2\epsilon^2
= \mathcal O(\delta^2 +\epsilon^2)\,, 
\]
and we conclude that
\begin{equation}\label{zeta_use}
\begin{split}
\mathbb E_{t\omega}\big[\mathbb E_{xy}[|\bar z_L^*-\big(y-\GFO(x)\big)|^2]\big]
\le & (1+{\zeta}^2)\mathbb E_{t\omega}\big[\mathbb E_{xy}[| \bar z_L-z_1|^2]\big]\\
&+\mathcal O\big(\bar\delta+(\delta^2+\epsilon^2)(1+\zeta^{-2})\big)\,.
\end{split}
\end{equation}
It remains to estimate the term $\mathbb E_{t\omega}\big[\mathbb E_{xy}[|\bar z_L-z_1|^2]\big]$,
which involves the following  four steps: 
\begin{itemize}
\item[1.] formulate an error representation of the exact path $z_t$ evaluated along the discrete value function for the ordinary difference equation in \VIZ{min_barz},
\item[2.] estimate the derivatives of the value functions for the differential equation in \VIZ{min_z} and the difference equation in \VIZ{min_barz}, 
\item[3.] derive an error estimate for $\mathbb E_{t\omega}[|z_\ell-\bar z_\ell|^2]$, using a discrete Gronwall inequality  applied to the difference of the 
dynamics of $\bar z_\ell$ and $z_\ell$, 
and
\item[4.] use Steps 1,2, and 3 to derive the Monte Carlo quadrature error for 
\[\RE \sum_{\ell=0}^{L-1}\sum_{k=1}^{{K}} \frac{\hat b_{\ell k}e^{\IU\omega_{\ell k}\cdot \bar z_\ell}}{KLp(t_{\ell k},\omega_{\ell k})}
+\RE\sum_{\ell=0}^{L-1}\sum_{k=1}^{{K}} \frac{\hat c_{\ell k}e^{\IU\omega'_{\ell k}\cdot x}}{KLp'(t'_{\ell k},\omega'_{\ell k})}
\, .\] 
\end{itemize}

\noindent{\it Step 1.}[Error representation]  
We define
\[
\begin{split}
\bar B(\bar z_\ell,\ell) & := \RE \sum_{k=1}^{{K}} \frac{\hat b_{\ell k}e^{\IU\omega_{\ell k}\cdot \bar z_\ell}}{KLp(t_{\ell k},\omega_{\ell k})} \\
\bar C(\ell) & := \RE \sum_{k=1}^{{K'}} \frac{\hat c_{\ell k}e^{\IU\omega'_{\ell k}\cdot x}}{KLp'(t'_{\ell k},\omega'_{\ell k})}\\
\bar D(\bar z_\ell,\ell) & := \bar B(\bar z_\ell,\ell) + \bar C(\ell)\,.
\end{split}
\]
We fix a data point $(x,y)$, and then define for this $x$ and for any $z\in\rset$  the discrete value function 
\begin{equation*}
\begin{split}
 \bar u(z,\bar\ell) &=\bar z_L\, ,\\
 \bar z_{\ell +1} &= \bar z_\ell + \bar B(\bar z_\ell,\ell) + \bar C(\ell)\,,\\
\bar z_{\bar\ell} &= z \, .
\end{split}
\end{equation*}
We have
\[
\begin{split}
z_L-\bar z_L&=\bar u(z_L,L)-\bar u(z_0,0)\\
&=\sum_{\ell=0}^{L-1}\big(\bar u(z_{\ell+1},\ell+1)-
\bar u(z_\ell,\ell)\big)
\end{split}
\]
and  by construction there holds for any $z$
\[
\bar u(z,\ell)=\bar u\big(z+\bar D(z,\ell),\ell+1\big)\, .
\]
Introducing for $\ell=0,\ldots,L$  the notation
\[
\begin{split}
B(z_\ell,\ell)&:= \int_{t_\ell}^{t_{\ell+1}}
\int_{\rset} \hat b(t,\omega)e^{\IU\omega\cdot z_t} \rmD \omega \rmD t\, ,\\
 C(\ell)&:=\int_{t_\ell}^{t_{\ell+1}}
\int_{\rset^d} \hat c(t,\omega)e^{\IU\omega\cdot x} \rmD \omega \rmD t\, ,\\
D(z_\ell,\ell) &:= B(z_\ell,\ell)+C(\ell)\, ,\\
\end{split}
\]
telescoping summation implies
\[
\begin{split}
z_L-\bar z_L &= \sum_{\ell=0}^{L-1}
\big(\bar u(z_\ell + D(z_\ell,\ell),\ell+1)-
\bar u(z_\ell+\bar D(z_\ell,\ell),\ell+1)\big)\\
&=\sum_{\ell=0}^{L-1}\int_0^1 \partial_z\bar u\Big(z_\ell+s\big(D(z_\ell,\ell)-\bar D(z_\ell,\ell)\big),\ell+1\Big) \rmD s 
\big(D(z_\ell,\ell)-\bar D(z_\ell,\ell)\big)\\
&=\sum_{\ell=0}^{L-1}\partial_z{\UBB}(z_\ell)
\big(D(z_\ell,\ell)-\bar D(z_\ell,\ell)\big)\,,
\end{split}
\]
where, for the sake of brevity, we defined
\[
\partial_z{\UBB}(z_\ell) = \int_0^1 \partial_z\bar u\Big(z_\ell+s\big(D(z_\ell,\ell)-\bar D(z_\ell,\ell)\big),\ell+1\Big) \rmD s\,.
\]
Next we let
\[
u(z,\ell)=z_L= z_t\big|_{t=1}\, ,
\]
where
\[
\begin{split}
 \frac{ \rmD  z_t}{ \rmD t} &= \RE \int_{\rset} \hat b(t,\omega)e^{\IU\omega\cdot z_t} \rmD \omega 
+\RE \int_{\rset^d} \hat c(t,\omega)e^{\IU\omega'\cdot x} \rmD \omega\, ,\quad t>t_\ell\, ,\\
 z_{t_\ell} &= z\, .
\end{split}
\]
We obtain an error representation based on the exact optimal path $z_\ell$ evaluated along the discrete value function $\bar u$
\begin{equation}\label{1}
\begin{split}
&\mathbb E_{t\omega}[|z_L-\bar z_L|^2] = 
\mathbb E_{t\omega}\Big[\Big(\sum_{\ell=0}^{L-1}\partial_z {\UBB}(z_\ell)\big(D(z_\ell,\ell)-\bar D(z_\ell,\ell)\big)\Big)^2\Big]\\
&=\mathbb E\Big[\Big(\sum_{\ell=0}^{L-1}\partial_z  u(z_\ell,{\ell+1})\big(D(z_\ell,\ell)-\bar D(z_\ell,\ell)\big)\\
&\quad +\sum_{\ell=0}^{L-1}\big(\partial_z {\UBB}(z_\ell)-\partial_z  u(z_\ell,{\ell+1})\big)
\big(D(z_\ell,\ell)-\bar D(z_\ell,\ell)\big)\Big)^2\Big]\\
&\le (1+\zeta^2)\mathbb E_{t\omega}\Big[\Big(\sum_{\ell=0}^{L-1}\partial_z  u(z_\ell,{\ell+1})\big(D(z_\ell,\ell)-\bar D(z_\ell,\ell)\big)\Big)^2\Big]\\
&\quad +(1+\zeta^{-2})\mathbb E_{t\omega}\Big[\Big(
\sum_{\ell=0}^{L-1}\big(\partial_z {\UBB}(z_\ell)-\partial_z  u(z_\ell,{\ell+1})\big)
\big(D(z_\ell,\ell)-\bar D(z_\ell,\ell)\big)\Big)^2\Big]\\
\end{split}
\end{equation}
for any $\zeta>0$.
Introducing
\[
\widetilde{D}(z_\ell,\ell):=
\RE\int_{t_\ell}^{t_{\ell+1}} \int_{\rset}\hat b(t,\omega)e^{\IU\omega\cdot z_\ell} \rmD \omega \rmD t
+\RE\int_{t_\ell}^{t_{\ell+1}} \int_{\rset^d}\hat c(t,\omega)e^{\IU\omega\cdot x} \rmD \omega \rmD t
\]
we have
\begin{equation}\label{Dz}
\begin{split}
D(z_\ell,\ell)-\bar D(z_\ell,\ell) &=
D(z_\ell,\ell)-\widetilde{D}(z_\ell,\ell)
+\widetilde{D}(z_\ell,\ell)- \bar D(z_\ell,\ell)\\
&=\underbrace{\RE\int_{t_\ell}^{t_{\ell+1}} \int_{\rset}\hat b(t,\omega)e^{\IU\omega\cdot z_t} \rmD \omega \rmD t
-\RE\int_{t_\ell}^{t_{\ell+1}} \int_{\rset}\hat b(t,\omega)e^{\IU\omega\cdot z_\ell} \rmD \omega \rmD t}_{=:L^{-1}\xi_a}\\
&\quad +\widetilde{D}(z_\ell,\ell)- \bar D(z_\ell,\ell)\, ,\\
\widetilde{D}(z_\ell,\ell)- \bar D(z_\ell,\ell)&=
\underbrace{\RE\int_{t_\ell}^{t_{\ell+1}} \int_{\rset}\hat b(t,\omega)e^{\IU\omega\cdot z_\ell} \rmD \omega \rmD t
-\RE \sum_{k=1}^{{K}} \frac{\hat b_{\ell k}}{KLp(t_{\ell k},\omega_{\ell k})}
e^{\IU\omega_{\ell k}\cdot z_\ell}}_{=:L^{-1}\xi_b(\ell)}\\
&\  +\underbrace{\RE\int_{t_\ell}^{t_{\ell+1}} \int_{\rset^d}\hat c(t,\omega)e^{\IU\omega\cdot x} \rmD \omega \rmD t
-\RE \sum_{k=1}^{{K'}} \frac{\hat c_{\ell k}}{KLp'(t'_{\ell k},\omega'_{\ell k})}
e^{\IU\omega'_{\ell k}\cdot x}}_{=:L^{-1}\xi_c(\ell)}\,.\\
\end{split}
\end{equation}
The deterministic $\xi_a$ has the 
representation
\begin{equation}\label{xi_a}
L^{-1}\xi_a=\int_{t_\ell}^{t_{\ell+1}} b(t,z_t)  \rmD t - \int_{t_\ell}^{t_{\ell+1}} b(t,z_\ell)  \rmD t
\end{equation}
with the bound $|\xi_a|=\mathcal O(L^{-1})$, using $b(t,z_t)-b(t,z_\ell)=0$ at $t=t_\ell$ and the boundedness of the derivative of $b$. In Step 3, we use $\mathbb E_{t\omega}[\widetilde{D}(z_\ell,\ell)-\bar D(z_\ell,\ell)]=0$ and Lemma \ref{MC}
to establish $\mathbb E_{t\omega}[L^2|\widetilde{D}(z_\ell,\ell)-\bar D(z_\ell,\ell)|^2]=\mathcal O(K^{-1})$.
We shall show in Step 4 that the first term on the right hand side of \VIZ{1} is $\mathcal O(L^{-1}K^{-1})$ by Monte-Carlo quadrature in space-time, using \[\mathbb E_{t\omega}[\sum_{\ell=0}^{L-1}\partial_z  u(z_\ell,{\ell+1})\big(\widetilde{D}(z_\ell,\ell)-\bar D(z_\ell,\ell)\big)]=0\,.\]  Step 3 estimates the second term on the right hand side of \VIZ{1}. The next step estimates the derivatives of $u$ and $\bar u$.

\smallskip

\noindent{\it Step 2.}[Derivatives of the value functions]
The definition $u(z,\ell)=z_1$ and
\[
\begin{split}
\frac{ \rmD  z_s}{ \rmD s} &= \RE \int_{\rset} \hat b(s,\omega)e^{\IU\omega\cdot z_s} \rmD \omega 
+\RE \int_{\rset^d} \hat c(s,\omega)e^{\IU\omega'\cdot x} \rmD \omega\, ,s>t\\
z_t&=z\,,
\end{split}\]
imply that $\partial_z u(z,\ell)=z'_{1,t_\ell}$, where the first variation 
$z'_{s,{t_\ell}}:=\frac{\partial z_s}{\partial  z_{{t_\ell}}} $ solves
\[
\begin{split}
\frac{ \rmD  z'_{s,{t_\ell}}}{ \rmD s} 
&= 
\RE \int_{\rset} \IU\,\hat b(s,\omega)e^{\IU\omega\cdot z_s} \omega z'_{s,{t_\ell}} \, \rmD \omega  
=\partial_zb(s,z_s) z'_{s,{t_\ell}}\,, \quad s>{t_\ell}\, ,\\
z'_{{t_\ell},{t_\ell}} &= 1 \, .
\end{split}
\]
This differential equation has the solution
\begin{equation}\label{uder}
\begin{split}
\partial_z u(z,\ell)&= 
e^{\int_{t_\ell}^1  \partial_z b(s,z_s) \rmD s 
 } \, .\\
\end{split}
\end{equation}
%

To similarly obtain a representation for the gradient of the discrete value function  we use  that $\partial_z\bar u(z,\bar\ell)= \bar z'_{L,\bar\ell} $, where the discrete variation $\bar z'_{\ell,\bar\ell}:=\frac{\partial \bar z_\ell}{\partial \bar z_{\bar\ell}} $ satisfies 
\[
\begin{split}
\bar z'_{\ell+1,\bar\ell} 
&= \big(1 + L^{-1} \bar b'_\ell(\bar z_\ell)\big)\bar z'_{\ell\bar\ell}\, ,\quad
\ell=\bar\ell,\ldots,L-1\, ,\\
\bar z'_{\bar\ell\bar\ell}&=1\, ,
\end{split}
\]
and we write
\[
\big(1 + L^{-1} \bar b'_\ell(\bar z_\ell)\big)\bar z'_{\ell\bar\ell} := \bar z'_{\ell,\bar\ell} 
+ \RE\sum_{k=1}^K
\frac{1}{KL}\IU\frac{\hat b_{\ell k}}{p(t_{\ell k},\omega_{\ell k})} e^{\IU\omega_{\ell k}\cdot \bar z_\ell} \omega_{\ell k}\bar z'_{\ell\bar\ell}\,.
\]
Thus, we have the solution  
\begin{equation}\label{uuder}
\bar z'_{L\bar\ell}= \prod_{\ell=\bar \ell}^{L-1}\big(1 +  L^{-1} \bar b'_\ell(\bar z_\ell)\big)\, .
\end{equation}

We also obtain the second derivative estimate
\begin{equation}\label{Hessian}
|\partial^2_{z} u(z,x,\ell)| \le \int_{t_\ell}^1  
|\partial^2_z b(s,z_s)| e^{\int_{t_\ell}^s|\partial_z b(s,z_r)| \rmD r}
 \rmD s \,\, e^{\int_{t_\ell}^1  |\partial_z b(s,z_s)| \rmD s}\, .
\end{equation}

\smallskip
\noindent{\it Step 3.}[Gronwall inequality] The second term on the right hand side of \VIZ{1} requires an estimate of $z_\ell-\bar z_\ell$. 
Let $\varepsilon_\ell:= z_\ell-\bar z_\ell$ with $\varepsilon_1=z_1-\bar z_1=0$. We first use  a discrete Gronwall inequality, see Lemma \ref{lemma_gronwall}, to estimate $\varepsilon_\ell$ and then estimate $\partial_z{\UBB}-\partial_z u$. 

\smallskip
\noindent{\it Estimate of $\varepsilon_\ell=z_\ell-\bar z_\ell$}. We have by the dynamics \VIZ{min_barz} and \VIZ{min_z}
\begin{equation}\label{Ez}
\begin{split}
\varepsilon_{\ell+1} &=\varepsilon_\ell +\RE \sum_{k=1}^K \frac{\hat b_{\ell k}}{KLp(t_{\ell k},\omega_{\ell k})}
\big(e^{\IU\omega_{\ell k}\cdot z_\ell}-e^{\IU\omega_{\ell k}\cdot \bar z_\ell}\big)\\
&\quad +
\underbrace{\RE\int_{t_\ell}^{t_{\ell+1}} \int_{\rset}\hat b(t,\omega)e^{\IU\omega\cdot z_t} \rmD \omega \rmD t
-\RE\int_{t_\ell}^{t_{\ell+1}} \int_{\rset}\hat b(t,\omega)e^{\IU\omega\cdot z_\ell} \rmD \omega \rmD t}_{=:L^{-1}\xi_a}\\
&\quad +
\underbrace{\RE\int_{t_\ell}^{t_{\ell+1}} \int_{\rset}\hat b(t,\omega)e^{\IU\omega\cdot z_\ell} \rmD \omega \rmD t
-\RE \sum_{k=1}^K \frac{\hat b_{\ell k}}{KLp(t_{\ell k},\omega_{\ell k})}
e^{\IU\omega_{\ell k}\cdot z_\ell}}_{=:L^{-1}\xi_b(\ell)}\\
&\quad +\underbrace{\RE\int_{t_\ell}^{t_{\ell+1}} \int_{\rset^d}\hat c(t,\omega)e^{\IU\omega\cdot x} \rmD \omega \rmD t
-\RE \sum_{k=1}^K \frac{\hat c_{\ell k}}{KLp'(t'_{\ell k},\omega'_{\ell k})}
e^{\IU\omega'_{\ell k}\cdot x}}_{=:L^{-1}\xi_c(\ell)}\\
&=: \varepsilon_\ell +\RE \sum_{k=1}^K \frac{\hat b_{\ell k}}{KLp(t_{\ell k},\omega_{\ell k})}
e^{\IU\omega_{\ell k}\cdot z_\ell}
\big(1-e^{-\IU\omega_{\ell k}\cdot \varepsilon_\ell}\big)
+L^{-1}(\xi_a+\xi_b+\xi_b)\, .
\end{split}
\end{equation}
The term $L^{-1}\xi_a$ is a deterministic quadrature problem \VIZ{xi_a} and since
$ \rmD b(t,z_t)/ \rmD t$ is bounded we have $|\xi_a|=\mathcal O(L^{-1})$. 
Using the Monte Carlo quadrature estimate in Lemma \ref{MC} we obtain for $j=b,c$
\begin{equation}\label{2}
\begin{split}
\xi_a &=\mathcal O(L^{-1})\, ,\\
\mathbb E_{t\omega}[\xi_j] &=0\, ,\\
\mathbb E_{t\omega}[|\xi_j|^2] &=\mathcal O(K^{-1})\, ,\\
\mathbb E_{t\omega}[|\xi_j|^4] &=\mathcal O(K^{-2})\, .\\
\end{split}
\end{equation}

The next step is to estimate the sum over $k$ on the right hand side of \VIZ{Ez}. We have
\[
|1-e^{-\IU\omega_{\ell k}\cdot\varepsilon_\ell}|\le |\omega_{\ell k}| |\varepsilon_\ell |
\]
which implies 
\[
a_\ell:=\frac{|\RE  \sum_{k=1}^{{K}} \frac{\hat b_{\ell k}}{Kp(t_{\ell k},\omega_{\ell k})}
e^{\IU\omega_{\ell k}\cdot z_\ell}
\big(1-e^{-\IU\omega_{\ell k}\cdot \varepsilon_\ell}\big)|}{|\varepsilon_\ell|}
\le  \sum_{k=1}^{{K}} \frac{|\hat b_{\ell k}||\omega_{\ell k}|}{Kp(t_{\ell k},\omega_{\ell k})}\,.
\]
Next we determine a probability that $a_\ell$ are bounded by a number $\alpha$ for  $\ell=0,\ldots,L$. 
By the  Chernoff bound in Lemma \ref{lemma_chenrnoff}, based on
$\frac{|\hat b(t_{\ell k},\omega_{\ell k})|}{p(t_{\ell k},\omega_{\ell k})}|\omega_{\ell k}|$ being bounded, we have
\[
|a_\ell|\le \sum_{k=1}^K \frac{|\hat b_{\ell k}|}{Kp(t_{\ell k},\omega_{\ell k})}|\omega_{\ell k}| = 
\int_{\rset}|\hat b(t_\ell,\omega)||\omega| \rmD \omega + K^{-1}\nu_\ell
\]
where $\nu_\ell$ is a random variable where ${\mathbb{P}}( \frac{|\nu_\ell |}{K^{}}>\alpha)$ is bounded by
$e^{-cK\alpha^2}$ for some positive constant $c$. We need the probability for the event $\max_\ell \frac{|\nu_\ell |}{K^{}}>\alpha$
which then by independence becomes 
\[
\mathbb P(\max_\ell \frac{|\nu_\ell |}{K^{}}>\alpha)\le Le^{-cK\alpha^2}
\]
for some positive constant $c$. 

For any $\zeta$, in particular  $\zeta=L^{-1/2}$, we obtain
\begin{equation}\label{3}
\begin{split}
|\varepsilon_{\ell +1}|^2 &= |\varepsilon_\ell + \RE \sum_{k=1}^K \frac{\hat b_{\ell k}}{KLp(t_{\ell k},\omega_{\ell k})}
e^{\IU\omega_{\ell k}\cdot z_\ell}
\big(1-e^{-\IU\omega_{\ell k}\cdot \varepsilon_\ell}\big)
+L^{-1}(\xi_a+\xi_b+\xi_c)|^2\\
&\le |\varepsilon_\ell |^2(1 + L^{-1}a_\ell)^2(1+\zeta^2) + (1+\zeta^{-2})L^{-2} |\xi_a+\xi_b+\xi_c|^2\\
&\le |\varepsilon_\ell |^2(1 + L^{-1}a_\ell)^2(1+L^{-1}) + (1+L)L^{-2} |\xi_a+\xi_b+\xi_c|^2\,.
\end{split}
\end{equation}
When all $a_\ell$ are bounded by $\alpha$, we can apply Gronwall's inequality in Lemma \ref{lemma_gronwall} to
obtain $|\varepsilon_\ell|^2 = \mathcal O(\sum_{\ell=0}^{L-1} |\xi_a(\ell)+\xi_b(\ell)+\xi_c(\ell)|^2 L^{-1})$.
In the complement set, namely the event $\max_\ell \frac{|\nu_\ell |}{K^{}}>\alpha$, which has a small probability bounded by $Le^{-cK\alpha^2}$, the differences  $|\varepsilon_\ell|$ are uniformly bounded, since by assumptions the data $|f(x)|$ are bounded in the maximum norm, and
$\|\frac{| \partial_\omega \hat h({F}\cdot)|^2}{p(\cdot)}\|_{L^{\infty}(\rset)}+
\|\frac{\hat f}{p'}\|_{L^\infty(\rset^d)}$ are bounded, which implies that the maximum norm of $|\bar z|$ is uniformly bounded with respect to all variables.
Consequently there is a positive constant $c$ such that  $\mathbb E_{t\omega}[|\varepsilon_\ell|^2] =\mathcal O (L^{-2}+K^{-1} + Le^{-cK})$, for all $\ell$. 

Squaring \VIZ{3}
yields the recursion
\begin{equation}\label{4}
\begin{split}
|\varepsilon_{\ell +1}|^4 & \le
|\varepsilon_\ell |^4(1 + L^{-1}a_\ell)^4(1+L^{-1})^3 + (1+L)^3L^{-4} |\xi_a+\xi_b+\xi_c|^4
\end{split}
\end{equation}
%
%
and a similar Gronwall estimate implies  that there is a positive constant $c$ such that
\begin{equation}\label{eps_est}
\begin{split}
\mathbb E_{t\omega}[|\varepsilon_\ell|^2] &=\mathcal O (L^{-2}+K^{-1} + Le^{-cK})\, ,\\
\mathbb E_{t\omega}[|\varepsilon_\ell|^4] &=\mathcal O(L^{-4}+K^{-2}+Le^{-cK})\, .\\
\end{split}
\end{equation}

\smallskip
\noindent{\it Estimate of $\partial_z{\UBB}-\partial_z u$.}
{To estimate $\partial_z{\UBB}-\partial_z u$} we perform the splitting 
\begin{equation}\label{uu_diff}
\begin{split}
&\partial_z{\UBB}(z_\ell)-\partial_z u(z_\ell,{\ell+1})\\
&=\int_0^1 \partial_z\bar u\Big(z_\ell+s\big(D(z_\ell,\ell)-\bar D(z_\ell,\ell)\big),\ell+1\Big) \rmD s
-\partial_z u(z_\ell,{\ell+1})\\
&=
\int_0^1\Big[\partial_z\bar u\Big(z_\ell+s\big(D(z_\ell,\ell)-\bar D(z_\ell,\ell)\big),\ell+1\Big)\\
&\quad -\partial_z u\Big(z_\ell+s\big(D(z_\ell,\ell)-\bar D(z_\ell,\ell)\big),{\ell+1}\Big)\Big] \rmD s\\
&\qquad +\int_0^1[\partial_zu\Big(z_\ell+s\big(D(z_\ell,\ell)-\bar D(z_\ell,\ell)\big),{\ell+1}\Big)-\partial_zu(z_\ell,{\ell+1}) ] \rmD s\,.
\end{split}
\end{equation}
The second term has the estimate
\begin{equation}\label{uu_diff2}
\begin{split}
&|\int_0^1[\partial_zu\Big(z_\ell+s\big(D(z_\ell,\ell)-\bar D(z_\ell,\ell)\big),{\ell+1}\Big)-\partial_zu(z_\ell,{\ell+1}) ] \rmD s|\\
&\le   \|\partial^2_z u\|_{L^\infty}|D(z_\ell,\ell)-\bar D(z_\ell,\ell)|
\end{split}
\end{equation}
where we use \VIZ{Hessian} to bound the second derivatives of $u$.
To bound the first term on the right hand side
we use for each $s\in[0,1]$ the path $z(t;s)$ that solves the differential equation \VIZ{min_z}, with the given $\hat b(t,\omega)$
and $\hat c(t,\omega)$ obtained from $\tz$, and   at time  
$t_{\ell+1}$ the path starts with value $z_\ell + s\big(D(z_\ell,\ell)-\bar D(z_\ell,\ell)\big)$. Then we estimate the difference $\partial_z\bar u-\partial_z u$ along the same such paths $z(t;s)$. At all time levels $t_{\ell'}$, 
$\ell'=\ell,\ldots,L$ we still denote the path value $z_{\ell'}$. 
We have by \VIZ{uder} 
\[
\partial_z u(z_\ell,{\ell+1})=e^{\int_{t_{\ell+1}}^1 b(s,z_s) \rmD s}
\] 
and with probability  $1-Le^{-cK\alpha^2}$ by \VIZ{uuder} and \VIZ{Ez}
\[
\begin{split}
\partial_z {\UBB}(z_\ell)&= \prod_{\ell'=\ell}^{L-1}
(1+\underbrace{\RE\sum_{k=1}^K
\frac{\hat b_{\ell' k}e^{\IU\omega_{\ell'k}\cdot \bar z_{\ell'}}}{KLp(t_{\ell' k},\omega_{\ell'k})}}_{=:\hat b_{\ell'}})\\
&= \prod_{\ell'=\ell}^{L-1}
e^{\hat b_{\ell'}+\mathcal O(|\hat b_{\ell'}|^2) }\\
&=e^{\sum_{\ell'=\ell}^{L-1}\big(\hat b_{\ell'}+\mathcal O(|\hat b_{\ell'}|^2)\big)}\\
&=\exp\big({\mathcal O(L^{-1})+\sum_{\ell'=\ell}^{L-1}\RE\sum_{k=1}^K
\frac{\hat b_{\ell' k}e^{\IU\omega_{\ell'k}\cdot \bar z_{\ell'}}}{KLp(t_{\ell' k},\omega_{\ell'k})}}\big)\, .\\
\end{split}
\]
As in \VIZ{Ez} we obtain
\[
\begin{split}
&\RE\sum_{\ell'=\ell}^{L-1}\sum_{k=1}^K
\frac{\hat b_{\ell' k}e^{\IU\omega_{\ell'k}\cdot \bar z_{\ell'}}}{KLp(t_{\ell' k},\omega_{\ell'k})}\\ &=
\RE\sum_{\ell'=\ell}^{L-1}\sum_{k=1}^K
\frac{\hat b_{\ell' k}e^{\IU\omega_{\ell'k}\cdot  z_{\ell'}}}{KLp(t_{\ell' k},\omega_{\ell'k})}
-\RE\sum_{\ell'=\ell}^{L-1}\sum_{k=1}^K
\frac{\hat b_{\ell' k}e^{\IU\omega_{\ell'k}\cdot  z_{\ell'}}}{KLp(t_{\ell' k},\omega_{\ell'k})}(1-e^{-\IU\omega_{\ell'k}\cdot\varepsilon_{\ell'}})\\
&= \sum_{\ell'=\ell}^{L-1}\Big(
\int_{t_{\ell}}^{t_{\ell+1}}b(t,z_{\ell'}) \rmD t +\frac{\xi_b(\ell')}{L}+\RE\sum_{k=1}^K
\frac{\hat b_{\ell' k}e^{\IU\omega_{\ell'k}\cdot  z_{\ell'}}}{KLp(t_{\ell' k},\omega_{\ell'k})}(1-e^{-\IU\omega_{\ell'k}\cdot\varepsilon_{\ell'}})\Big)\\
\end{split}
\]
so that with probability $1-Le^{-cK\alpha^2}$
\[
\begin{split}
|\partial_z {\UBB}(z_\ell)-\partial_z u(z_\ell,\ell+1)|^2
= 
\mathcal O(L^{-2}) + \mathcal O(\sum_{\ell=\ell'}^{L-1}\frac{| \varepsilon_{\ell'}|^2+|\xi_b(\ell')|^2}{L}) 
\end{split}
\]
and by \VIZ{eps_est}, \VIZ{uu_diff} and \VIZ{uu_diff2}
\begin{equation}\label{ud}
\begin{split}
\mathbb E_{t\omega}[|\partial_z {\UBB}(z_\ell) -\partial_z u(z_\ell,\ell+1)|^2]=\mathcal O(K^{-1}+L^{-2}+Le^{-cK})\,,\\
\mathbb E_{t\omega}[|\partial_z {\UBB}(z_\ell) -\partial_z u(z_\ell,\ell+1)|^4]=\mathcal O(K^{-2}+L^{-4}+Le^{-cK})\,,\\
\mathbb E_{t\omega}[|D(z_\ell,\ell)-\bar D(z_\ell,\ell)|^2]=L^{-2}\mathcal O(K^{-1}+L^{-2})\,,\\
\mathbb E_{t\omega}[|D(z_\ell,\ell)-\bar D(z_\ell,\ell)|^4]=L^{-4}\mathcal O(K^{-2}+L^{-4})\,.\\
\end{split}
\end{equation}

We are now ready to estimate the second term on the right hand side of \VIZ{1}, namely
\begin{equation*}
\begin{split}
&(1+\zeta^{-2})\mathbb E_{t\omega}\Big[\Big(
\sum_{\ell=0}^{L-1}\big({\partial_z} {\UBB}(z_\ell)-{\partial_z}  u(z_\ell,{\ell+1})\big)
\big(D(z_\ell,\ell)-\bar D(z_\ell,\ell)\big)\Big)^2\Big]\,.\\
\end{split}
\end{equation*}
Introducing the notation $\Delta u_\ell:={\partial_z} {\UBB}(z_\ell)-{\partial_z}  u(z_\ell,{\ell+1})$
and  $\Delta D_\ell:= D(z_\ell,\ell)-\bar D(z_\ell,\ell)$  
we have
\begin{equation}\label{2-4}
\begin{split}
    &\mathbb E_{t\omega}\Big[\Big(
\sum_{\ell=0}^{L-1} \big({\partial_z} {\UBB}(z_\ell)-{\partial_z}  u(z_\ell,{\ell+1})\big)
\big( D(z_\ell,\ell)-\bar D(z_\ell,\ell)\big)\Big)^2\Big]\\
&= \sum_{\ell=0}^{L-1}\sum_{\ell'=0}^{L-1}
\mathbb E_{t\omega}\big[(\Delta u_\ell\Delta D_\ell)
(\Delta u_{\ell'}\Delta D_{\ell'})\big]\\
&\le  \sum_{\ell=0}^{L-1}\sum_{\ell'=0}^{L-1}
\big(\mathbb E_{t\omega}[|\Delta u_\ell|^2|\Delta u_{\ell'}|^2]\,
\mathbb E_{t\omega}[|\Delta D_\ell|^2|\Delta D_{\ell'}|^2]\big)^{1/2}\\
&=L^2\mathcal O(K^{-1}+L^{-2}+Le^{-cK})\mathcal O(K^{-1}+L^{-2})L^{-2}
=\mathcal O(K^{-2}+L^{-4}+Le^{-cK})\, ,
\end{split}
\end{equation}
using \VIZ{ud}. The next step estimates the first term on the right hand side of \VIZ{1}.

\smallskip
\noindent{\it Step 4.}[Monte Carlo quadrature error] We will see that provided
$L\ll K\ll L^3$, the generalization error is dominated by
\begin{equation*}
\begin{split}
R_0&:= \mathbb E_{t\omega}\Big[\Big(\sum_{\ell=0}^{L-1} {\partial_z} u(z_\ell,\ell+1) \big(
D(z_\ell)-\bar D(z_\ell)\big)\Big)^2\Big]
=\mathbb E_{t\omega}\Big[\Big(\sum_{\ell=0}^{L-1} {\partial_z} u(z_\ell,\ell+1)\times\\ &\qquad\times \big\{
\RE\int_{t_{\ell}}^{t_{\ell+1}} \int_{\rset} \hat b(t,\omega)e^{\IU\omega\cdot z_t} 
 \rmD \omega \rmD  t 
-\RE\sum_{k=1}^K \frac{\hat b_{\ell k}}{KLp(t_{\ell k},\omega_{\ell k})} e^{\IU\omega_{\ell k}\cdot z_\ell }
\\ &\qquad\quad
+ \RE\int_{t_{\ell}}^{t_{\ell+1}} \int_{\rset^d} \hat c(t,\omega)e^{\IU\omega\cdot x} 
 \rmD \omega \rmD  t 
-\RE\sum_{k=1}^K \frac{\hat c_{\ell k}}{KLp'(t'_{\ell k},\omega'_{\ell k})} e^{\IU\omega'_{\ell k}\cdot x}\big\}
\Big)^2\Big]\, .\\
\end{split}
\end{equation*}
The sum over $\ell $ in $R_0$ 
can for any $\zeta\in\rset$ be split (to separate out the mean zero terms) as in \VIZ{Dz} 
\begin{equation}\label{R0_comp}
\begin{split}
&R_0 = \mathbb E_{t\omega}\Big[\Big(
\sum_{\ell=0}^{L-1} {\partial_z} u(z_\ell,\ell+1) \big(D(z_\ell)-\widetilde{D}(z_\ell)\big)
+ \sum_{\ell=0}^{L-1}{\partial_z} u(z_\ell,\ell+1) \big(
\widetilde{D}(z_\ell)-\bar D(z_\ell)\big)\Big)^2\Big]\\
&\le (1+\zeta^2)\mathbb E_{t\omega}\Big[\Big( \sum_{\ell=0}^{L-1}{\partial_z} u(z_\ell,\ell+1) \big(
\widetilde{D}(z_\ell)-\bar D(z_\ell)\big)\Big)^2\Big]\\
&\quad +
(1+\zeta^{-2}) \mathbb E_{t\omega}\Big[\Big(
\sum_{\ell=0}^{L-1} {\partial_z} u(z_\ell,\ell+1) \big(D(z_\ell)-\widetilde{D}(z_\ell)\big)\Big)^2\Big]\\
&=(1+\zeta^2)\mathbb E_{t\omega}\Big[\Big(\sum_{\ell=0}^{L-1} {\partial_z} u(z_\ell,\ell+1)\times\\
&\qquad\times\big\{
\RE\int_{t_{\ell}}^{t_{\ell+1}} \int_{\rset} \hat b(t,\omega)e^{\IU\omega\cdot z_\ell} 
 \rmD \omega \rmD  t 
-\RE\sum_{k=1}^K \frac{\hat b_{\ell k}}{KLp(t_{\ell k},\omega_{\ell k})} e^{\IU\omega_{\ell k}\cdot z_\ell }
\\ &\qquad\quad
+ \RE\int_{t_{\ell}}^{t_{\ell+1}} \int_{\rset^d} \hat c(t,\omega)e^{\IU\omega\cdot x} 
 \rmD \omega \rmD  t 
-\RE\sum_{k=1}^K \frac{\hat c_{\ell k}}{KLp'(t'_{\ell k},\omega'_{\ell k})} e^{\IU\omega'_{\ell k}\cdot x}\big\}
\Big)^2\Big]\\
&\quad +(1+\zeta^{-2})\mathbb E_{t\omega}\Big[\Big(\sum_{\ell=0}^{L-1} {\partial_z} u(z_\ell,\ell+1) \times\\
&\qquad\times
\big\{
\RE\int_{t_{\ell}}^{t_{\ell+1}} \int_{\rset} \hat b(t,\omega)e^{\IU\omega\cdot z_t} 
 \rmD \omega \rmD  t -
\RE\int_{t_{\ell}}^{t_{\ell+1}} \int_{\rset} \hat b(t,\omega)e^{\IU\omega\cdot z_\ell} 
 \rmD \omega \rmD  t \big\}\Big)^2\Big]
\, .\\
\end{split}
\end{equation}
To estimate 
the second expected value on the right hand side
we will use that $\frac{ \rmD  z_t}{ \rmD t}$ is small in the following sense. 
Lemma \ref{lemma_z}  implies that
\[
\frac{ \rmD  z_t}{ \rmD t}=\frac{f(x)- \GF(x)
}{1+\delta}
\] 
which is sufficiently small in order to obtain
\begin{equation}\label{beta_zero}
\begin{split}
&\mathbb E_{t\omega}\Big[\Big(\sum_{\ell=0}^{L-1} {\partial_z} u(z_\ell,\ell+1)
\big\{
\RE\int_{t_{\ell}}^{t_{\ell+1}} \int_{\rset} \hat b(t,\omega)e^{\IU\omega'_{\ell k}\cdot z_t} 
 \rmD \omega \rmD  t -
\RE\int_{t_{\ell}}^{t_{\ell+1}} \int_{\rset} \hat b(t,\omega)e^{\IU\omega\cdot z_\ell} 
 \rmD \omega \rmD  t \big\}\Big)^2\Big]\\
&\le 
\mathbb E_{t\omega}\Big[\Big(\sum_{\ell=0}^{L-1} |{\partial_z} u(z_\ell,\ell+1)| \int_{t_{\ell}}^{t_{\ell+1}} \int_{\rset} |\hat b(t,\omega)||\omega||z_t-z_\ell|
 \rmD \omega \rmD  t \big)\Big)^2\Big]\\
&\le \mathbb E_{t\omega}\Big[\Big(\sum_{\ell=0}^{L-1} |{\partial_z} u(z_\ell,\ell+1)|\sup_t\int_{\rset}\hat b(t,\omega)||\omega| \rmD \omega \max(t_{\ell+1}-t_\ell)^2||\frac{f(x)-\GF(x)
}{1+\delta}|
\Big)^2\Big]\\
&=\mathcal O(L^{-2}K^{-1})\, . 
\end{split}
\end{equation}
With $\GF=0$ we would instead obtain the bound $\mathcal O(L^{-2})$.

To estimate the first expected value on the right hand side of \VIZ{R0_comp} we note that we have 
\[
\mathbb E_{t\omega}[\RE\sum_{k=1}^K\frac{\hat c_{\ell k}}{KLp'(t'_{\ell k},\omega'_{\ell k})}e^{\IU\omega_{\ell k}'\cdot x}] =
\RE\int_{t_{\ell}}^{t_{\ell+1}}\hat c(t,\omega) e^{\IU\omega\cdot x} \rmD \omega \rmD  t
\]
and by Lemma \ref{MC}
\[
\mathbb E_{t\omega}\big[\big(\RE\sum_{k=1}^K\frac{\hat c_{\ell k}}{KLp'(t'_{\ell k},\omega'_{\ell k})}e^{\IU\omega_{\ell k}'\cdot x} -
\RE\int_{t_{\ell}}^{t_{\ell+1}}\hat c(t,\omega) e^{\IU\omega\cdot x} \rmD \omega \rmD  t\big)^2\big]
=\mathcal O(K^{-1}L^{-2})\, .
\]
All terms 
\[
\RE\sum_{k=1}^K\frac{\hat c_{\ell k}}{KLp'(t'_{\ell k},\omega'_{\ell k})}e^{\IU\omega_{\ell k}'\cdot x} -
\RE\int_{t_{\ell}}^{t_{\ell+1}}\hat c(t,\omega) e^{\IU\omega\cdot x} \rmD \omega \rmD  t
\] 
are independent
for different $\ell$, and ${\partial_z} u(z_\ell)$ is deterministic. 
The zero mean and independence properties of the terms in the first expected value on the right hand side of \VIZ{R0_comp} imply 
\[
\begin{split}
&\mathbb E_{t\omega}\Big[\Big(\sum_{\ell=0}^{L-1} {\partial_z} u(z_\ell,\ell+1)
\big\{
\RE\int_{t_{\ell}}^{t_{\ell+1}} \int_{\rset} \hat b(t,\omega)e^{\IU\omega\cdot z_\ell} 
 \rmD \omega \rmD  t 
-\RE\sum_{k=1}^K \frac{\hat b_{\ell k}}{KLp(t_{\ell k},\omega_{\ell k})} e^{\IU\omega_{\ell k}\cdot z_\ell }
\\ 
&\qquad\quad
+ \RE\int_{t_{\ell}}^{t_{\ell+1}} \int_{\rset^d} \hat c(t,\omega)e^{\IU\omega\cdot x} 
 \rmD \omega \rmD  t 
-\RE\sum_{k=1}^K \frac{\hat c_{\ell k}}{KLp'(t'_{\ell k},\omega'_{\ell k})} e^{\IU\omega'_{\ell k}\cdot x}\big\}
\Big)^2\Big]\\
&=\sum_{\ell=0}^{L-1}\mathbb E_{t\omega}\Big[\Big( \partial_z u(z_\ell,\ell+1)
\big\{
\RE\int_{t_{\ell}}^{t_{\ell+1}} \int_{\rset} \hat b(t,\omega)e^{\IU\omega\cdot z_\ell} 
 \rmD \omega \rmD  t 
- \RE\sum_{k=1}^K \frac{\hat b_{\ell k}}{KLp(t_{\ell k},\omega_{\ell k})} e^{\IU\omega_{\ell k}\cdot z_\ell }\big\}\Big)^2
\\ 
&\quad
+ \Big( \partial_z u(z_\ell,\ell+1)
\big\{\RE\int_{t_{\ell}}^{t_{\ell+1}} \int_{\rset^d} \hat c(t,\omega)e^{\IU\omega\cdot x} 
 \rmD \omega \rmD  t 
-\RE\sum_{k=1}^K \frac{\hat c_{\ell k}}{KLp'(t'_{\ell k},\omega'_{\ell k})} e^{\IU\omega'_{\ell k}\cdot x}\big\}
\Big)^2\Big]\,.
\end{split}
\]


The variance of these Monte Carlo approximations yields as in Lemma \ref{MC} and Lemma \ref{MC_quad}
\[
R_0= \mathcal O(L^{-1}K^{-1} + L^{-4})\, ,
\]
where the constant in $\mathcal O(L^{-1}K^{-1})$ is
\[
\int_0^1\int_{\rset} \frac{|\hat b(t,\omega)|^2|{\partial_z} u(z_t,t)|^2}{p(t,\omega)}  \rmD \omega \rmD t
+\int_0^1\int_{\rset^d} \frac{|\hat c(t,\omega)|^2|{\partial_z} u(z_t,t)|^2}{p'(t,\omega)}  \rmD \omega \rmD t\, ,
\]
which by Lemma \ref{p_opt} becomes minimal for
\[
p(t,\omega)= \frac{|\hat b(t,\omega)||{\partial_z} u(z_t,t)|}{\int_0^1\int_{\rset} |\hat b(t,\omega)||{\partial_z} u(z_t,t)|
  \rmD \omega \rmD t}
\]
and 
\[
p'(t,\omega)= \frac{|\hat c(t,\omega)||{\partial_z} u(z_t,t)|}{\int_0^1\int_{\rset^d} |\hat c(t,\omega)||{\partial_z} u(z_t,t)|
  \rmD \omega \rmD t}\, .
\]
Here we use the notation $u(z,t):=z_1$ where $z_t=z$ and $z_s$ solves the differential equation constraint in \VIZ{min_z} for $s>t$.

For a deep residual neural network to be as accurate as a residual neural network with one hidden layer, asymptotically as $KL\to\infty$, we therefore need
\[
\frac{1}{KL} + \frac{1}{K^2}+\frac{1}{L^4}\simeq \frac{1}{KL}
\]
which implies
\[
L\ll K\ll L^3\, ,
\]
and by choosing $\zeta=\big((\delta^2+\epsilon^2)KL\big)^{1/2}$ in \VIZ{zeta_use} we obtain \VIZ{error_estimate}.
\end{proof}

\begin{lemma}[Gronwall's inequality]\label{lemma_gronwall}
Assume that there are positive constants $\Gamma$ and $\Lambda$ such that
\[
\eta_{\ell}\le \Gamma \eta_{\ell-1} +\Lambda\,,\quad \ell=1,2,\ldots,L\,.
\]
Then 
\[
\eta_\ell\le \left\{\begin{array}{cc}
\Gamma^\ell \eta_0 + \Lambda \frac{\Gamma^\ell-1}{\Gamma-1}\,, & \Gamma\ne 1\,,\\
\eta_0 + \Lambda\ell\,, & \Gamma=1\,.
\end{array}
\right.
\]
\end{lemma}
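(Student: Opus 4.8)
The plan is to prove the bound by induction on $\ell$ (equivalently, by unrolling the recursion), the point being that the error accumulates as a finite geometric sum $\sum_{j=0}^{\ell-1}\Gamma^j$. First I would dispose of the base case $\ell=0$: the claimed bound there reads $\eta_0\le\eta_0$, since $\tfrac{\Gamma^0-1}{\Gamma-1}=0$ when $\Gamma\ne 1$ and $\Lambda\cdot 0=0$ when $\Gamma=1$. This holds trivially.

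For the inductive step, suppose the bound holds at level $\ell-1$. Using the hypothesis $\eta_\ell\le\Gamma\eta_{\ell-1}+\Lambda$ together with $\Gamma>0$ (so that inserting the induction assumption under the factor $\Gamma$ preserves the inequality), one computes in the case $\Gamma\ne 1$
\[
\eta_\ell\le\Gamma\Big(\Gamma^{\ell-1}\eta_0+\Lambda\frac{\Gamma^{\ell-1}-1}{\Gamma-1}\Big)+\Lambda
=\Gamma^\ell\eta_0+\Lambda\frac{\Gamma^\ell-\Gamma}{\Gamma-1}+\Lambda
=\Gamma^\ell\eta_0+\Lambda\frac{\Gamma^\ell-1}{\Gamma-1}\,,
\]
which is precisely the asserted bound at level $\ell$. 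In the case $\Gamma=1$ the same step degenerates to $\eta_\ell\le\eta_{\ell-1}+\Lambda\le(\eta_0+\Lambda(\ell-1))+\Lambda=\eta_0+\Lambda\ell$. Alternatively one can avoid the case split entirely: iterating the recursion $\ell$ times gives $\eta_\ell\le\Gamma^\ell\eta_0+\Lambda\sum_{j=0}^{\ell-1}\Gamma^j$, and evaluating the geometric sum as $\tfrac{\Gamma^\ell-1}{\Gamma-1}$ for $\Gamma\ne1$ and as $\ell$ for $\Gamma=1$ recovers both branches.

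There is essentially no obstacle here — the lemma is the standard discrete Gronwall inequality and the only care needed is in the geometric-sum algebra and in using the nonnegativity of $\Gamma,\Lambda$ (and of the iterates $\eta_\ell$) when propagating the inequality. The lemma is invoked in Step~3 of the proof of Theorem~\ref{thm_main} with $\eta_\ell=\mathbb E_{t\omega}[|\varepsilon_\ell|^2]$ (and an analogous fourth-power variant), where $\Gamma\le(1+L^{-1}\alpha)^2(1+L^{-1})=1+\mathcal O(L^{-1})$ and $\Lambda=\mathcal O\big((1+L)L^{-2}(K^{-1}+L^{-2})\big)$; then $\Gamma^L=\mathcal O(1)$ and $\tfrac{\Gamma^L-1}{\Gamma-1}=\mathcal O(L)$, which is exactly what yields the $\mathcal O(L^{-2}+K^{-1}+Le^{-cK})$ estimate quoted in \VIZ{eps_est}.
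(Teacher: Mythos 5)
Your proposal is correct and follows essentially the same route as the paper: the paper's proof simply unrolls the recursion $\ell$ times to obtain $\eta_\ell\le\Gamma^\ell\eta_0+\Lambda(1+\Gamma+\dots+\Gamma^{\ell-1})$ and evaluates the geometric sum, which is exactly the ``alternative'' version you describe (your induction formulation is just the same computation organized differently). No gaps.
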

\begin{proof} The proof is well known and included here for completeness.
We have 
\begin{equation}\label{lemma2_est}
\begin{split}
\eta_\ell &\le \Gamma \eta_{\ell-1} +\Lambda\\
& \le \Gamma(\Gamma \eta_{\ell-2}+\Lambda) + \Lambda\\
& = \Gamma^2 \eta_{\ell-2} + \Lambda(1+\Gamma)\\
& \le \Gamma^2(\Gamma \eta_{\ell-3} +\Lambda) + \Lambda(1+\Gamma)\\
& \le \Gamma^3 \eta_{\ell-3} + \Lambda(1+\Gamma+\Gamma^2)\\
& \le \Gamma^\ell \eta_0 + \Lambda(1+\Gamma+\ldots + \Gamma^{\ell-1})\\
& = \left\{\begin{array}{cc}
\Gamma^\ell \eta_0 + \Lambda \frac{\Gamma^\ell-1}{\Gamma-1}\,, & \Gamma\ne 1\,,\\
\eta_0 + \Lambda\ell\,, & \Gamma=1.
\end{array}
\right.
\end{split}
\end{equation}
\end{proof}

\begin{lemma}[Chernoff bound]\label{lemma_chenrnoff}
Assume that $\xi_j\,, j=1,\ldots,J$ are independent identically distributed random variables on $\rset$ with mean zero, $\mathbb E[\xi_1]=0$, and the exponential moment bound $\mathbb E[e^{\theta\xi_1}]<\infty$, for $\theta$ in a neighborhood of zero. Then the probability for large deviations of the 
empirical mean  has the exponential bound
\begin{equation}\label{chernoff_bound}
\mathbb P(|\sum_{j=1}^J \frac{\xi_j}{J}| \ge \alpha)\le e^{-JI(\alpha)}+e^{-JI(-\alpha)}\,,
\end{equation}
where the Legendre transform $I(\alpha):=\sup_{\theta\in\rset} (\theta\alpha-\log(\mathbb E[e^{\theta \xi_1}])$ is positive for $\alpha \in\rset\setminus\{0\}$.
\end{lemma}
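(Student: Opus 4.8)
The plan is to run the classical Cram\'er--Chernoff argument in three moves: a union bound to separate the two tails, the exponential Markov inequality together with independence to turn each tail into an optimization over a parameter $\theta$, and a short expansion of the cumulant generating function near the origin to certify that the rate function is strictly positive.

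First I would write $\bar\xi_J:=\tfrac1J\sum_{j=1}^J\xi_j$ and bound $\mathbb P(|\bar\xi_J|\ge\alpha)$ by $\mathbb P(\bar\xi_J\ge\alpha)+\mathbb P(\bar\xi_J\le-\alpha)$. It then suffices to bound the first probability by $e^{-JI(\alpha)}$, since the second is the same statement applied to the i.i.d.\ mean-zero variables $-\xi_j$, whose Legendre transform is $\sup_\theta(\theta\alpha-\log\mathbb E[e^{-\theta\xi_1}])=I(-\alpha)$ after the substitution $\theta\mapsto-\theta$. For the upper tail, assuming $\alpha>0$ (the case $\alpha=0$ is vacuous), I would apply Markov's inequality to $e^{\theta J\bar\xi_J}$ for $\theta\ge0$ in the neighbourhood of the origin where $\Lambda(\theta):=\log\mathbb E[e^{\theta\xi_1}]$ is finite, and use independence to factor $\mathbb E[e^{\theta\sum_j\xi_j}]=\big(\mathbb E[e^{\theta\xi_1}]\big)^J$. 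This yields $\mathbb P(\bar\xi_J\ge\alpha)\le e^{-J(\theta\alpha-\Lambda(\theta))}$, and optimizing over admissible $\theta\ge0$ gives $e^{-J\sup_{\theta\ge0}(\theta\alpha-\Lambda(\theta))}$.

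The one point to nail down is that for $\alpha>0$ the one-sided supremum over $\theta\ge0$ coincides with the two-sided supremum over $\mathbb R$ that defines $I(\alpha)$: for $\theta<0$ Jensen's inequality gives $\Lambda(\theta)\ge\theta\,\mathbb E[\xi_1]=0$, hence $\theta\alpha-\Lambda(\theta)\le\theta\alpha<0$, so negative $\theta$ never attain the supremum, while for $\theta$ outside the domain of $\Lambda$ the term is $-\infty$. Thus $\mathbb P(\bar\xi_J\ge\alpha)\le e^{-JI(\alpha)}$, which combined with the union bound proves \eqref{chernoff_bound}. For positivity of $I$, the finite-exponential-moment hypothesis makes $\Lambda$ convex and smooth near $0$, with $\Lambda(0)=0$ and $\Lambda'(0)=\mathbb E[\xi_1]=0$, so $\Lambda(\theta)=\tfrac12\,\mathrm{Var}(\xi_1)\,\theta^2+o(\theta^2)=o(\theta)$ as $\theta\to0$; hence for $\alpha>0$ and a sufficiently small $\theta>0$ the quantity $\theta\alpha-\Lambda(\theta)$ is strictly positive, so $I(\alpha)>0$, and symmetrically for $\alpha<0$. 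The degenerate case $\mathrm{Var}(\xi_1)=0$, where $\xi_1\equiv0$, must be dismissed separately: then the left side of \eqref{chernoff_bound} is zero and $I(\alpha)=+\infty$ for $\alpha\ne0$, so the bound is trivial.

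The hard part will be essentially nothing --- this is a textbook estimate. The only places requiring a moment of care are the reduction of the two-sided Legendre supremum to the one-sided one (which uses the mean-zero assumption) and the quantitative positivity of the rate function (which uses the local smoothness of $\Lambda$ furnished by the exponential-moment bound), together with the bookkeeping needed to exclude the degenerate case.
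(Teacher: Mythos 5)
Your proposal is correct and follows essentially the same route as the paper: the exponential Markov inequality with independence for the upper tail, the substitution $\xi_j\mapsto-\xi_j$ for the lower tail, and a Taylor expansion of the cumulant generating function at the origin (using $\mathbb E[\xi_1]=0$) to get positivity of $I$. The only difference is that you are slightly more careful than the paper in justifying that the one-sided supremum over $\theta\ge0$ agrees with the two-sided supremum defining $I(\alpha)$, and in dismissing the degenerate case $\mathrm{Var}(\xi_1)=0$; both points are fine and do not change the argument.
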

\begin{proof}
The proof is well known and included here for completeness.
Taylor expansion implies 
\[
I(\alpha)\ge \theta\alpha-\frac{\theta^2\mathbb E[\xi_1^2]}{2} +o(\theta^2)
\]
so that $I(\alpha)>0$ for $\alpha\ne 0$.
We have for $\theta\ge 0$
\begin{equation}\label{chern_proof}
\begin{split}
    \mathbb P\Big(\frac{1}{J}\sum_{j=1}^J \xi_j \ge \alpha\Big) 
    &=\mathbb E[1_{\{\sum_{j=1}^J {\xi_j} \ge J\alpha\}}]\\
    &\le \mathbb E[e^{-\theta J\alpha +\theta \sum_{j=1}^J {\xi_j}}
    1_{\{\sum_{j=1}^J {\xi_j} \ge J\alpha\}}]\\
     &\le e^{-J\theta \alpha} \mathbb E[ e^{\theta \sum_{j=1}^J {\xi_j}}]\\
     &= e^{-J\theta\alpha} (\mathbb E[ e^{\theta \xi_1}])^J\\
     &=e^{-J(\theta\alpha-\log \mathbb E[ e^{\theta \xi_1}])}\,.\\
\end{split}
\end{equation}
The estimate  $\mathbb P(\frac{1}{J}\sum_{j=1}^J \xi_j \le -\alpha)\le e^{-J(\theta\alpha-\log \mathbb E[ e^{-\theta \xi_1}])}$ follows by applying \VIZ{chern_proof} to $-\xi_j$
and  we obtain  \VIZ{chernoff_bound} by maximizing with respect to $\theta\in\rset$.
\end{proof}

\section{Numerical algorithms}\label{sec_alg}
This section presents numerical algorithms based on the deep residual network optimization problem \VIZ{min_barz}. 
First, we motivate an explicit layer by layer method to approximate the deep residual network optimization problem \VIZ{min_barz} for given frequency distributions. This layer by layer method is then used in combination with an adaptive Metropolis random Fourier feature method to sample the frequencies in an optimal way, without explicit knowledge of the frequency distribution. The sampled frequencies with their corresponding amplitudes are then used as initial values to train the final residual network with a global optimizer.

\subsection{A layer by layer approximation of amplitudes}
Consider the problem with infinite number of layers and nodes in Lemma \ref{lemma_z}
and replace $f(x)$ by $y(x)$ and set $\GF(x)=0$.  As shown in \VIZ{lagrange_v}, the Lagrange multiplier \[
\lambda_t(x)=2\big(\tz_1(x)-y(x)\big)\,,
\]
is constant for each data point $x$ and layer $t$.  We write the control as
\[
\begin{split}
\alpha(t,\tz_t(x),x)&=\RE \int_{\rset} \hat b(t,\omega)e^{\IU\omega\cdot \tz_t(x)} \rmD \omega 
+\RE \int_{\rset^d} \hat c(t,\omega)e^{\IU\omega'\cdot x} \rmD \omega\\
&=\RE\big(S_b(x)\hat b(t,\cdot) +S_c(x)\hat c(t,\cdot)\big)
\end{split}
\]
and let 
\[
\begin{split}
a(t,\cdot)&:=\left[
\begin{array}{c}
\hat b(t,\cdot)\\
\hat c(t,\cdot)
\end{array} \right]\,,\\
S(x)&:=[S_b(x)\ \ S_c(x)]\,,
\end{split}
\]
so that $\alpha=\RE (Sa)$. By
the Pontryagin principle, the problem to determine  the amplitudes $\hat b$ and $\hat c$  becomes
\begin{equation}\label{lambda_delta}
\begin{split}
&\argmin_{\hat b(t,\cdot),\hat c(t,\cdot)}\mathbb E_{xy}[\lambda_t(x) \RE\big(S(x)a\big)+\delta \Big(\RE\big(S(x)a\big)\Big)^2]\\
&=\argmin_{\hat b(t,\cdot),\hat c(t,\cdot)}\mathbb E_{xy}[\frac{\lambda_t(x)}{\delta} \RE\big(S(x)a\big)+ \Big(\RE\big(S(x)a\big)\Big)^2]\\
&=\argmin_{\hat b(t,\cdot),\hat c(t,\cdot)}\mathbb E_{xy}[|\frac{\lambda_t(x)}{2\delta} + \RE\big(S(x)a\big)|^2]\,. \\
\end{split}
\end{equation}
Based on a finite set of data $\{(x_n,y(x_n))\}_{n=1}^N$, 
the number of levels $L$ and nodes $K$,
and with the purpose to motivate the layer by layer approximation,
we  assume that the residual is reduced by  a factor $\delta$, from the levels $L$ to $\ell$ as
\[
\frac{\lambda_{t_\ell}(x)}{2\delta}=\frac{2( z_L(x)-y(x)\big)}{2\delta}\approx \bar z_\ell(x)-y(x)\,.
\] 
%
%
Then  by \VIZ{lambda_delta}, we obtain the
explicit layer by layer least squares approximation
\begin{equation}\label{layer}
\begin{split}
(\bar b_{\ell \cdot},\bar c_{\ell \cdot})&=\argmin_{{\small\begin{array}{c}
\bar b_{\ell k}\in\cset\\
\bar c_{\ell k}\in\cset
\end{array}}}\sum_{n=1}^N |\bar z_\ell(x_n)
+ {\rm Re}\big(\sum_{k=1}^K \bar b_{\ell k}e^{\IU\omega_{\ell k}\bar z_\ell(x_n)}+
\sum_{k=1}^K \bar c_{\ell k}e^{\IU\omega'_{\ell k}\cdot x_n}\big)- y(x_n)|^2\,,\\
\bar z_{\ell+1}(x)&=\bar z_\ell(x) + \RE\big(\sum_{k=1}^K \bar b_{\ell k}e^{\IU\omega_{\ell k}\bar z_\ell(x)}+
\sum_{k=1}^K \bar c_{\ell k}e^{\IU\omega'_{\ell k}\cdot x}\big)\,,\quad \ell=0,\ldots,L-1\,,\\
\bar z_0&=0\,.
\end{split}
\end{equation}

\subsection{Layer by layer training of amplitudes and frequencies}
In this subsection we present Algorithm \ref{alg:lblARFM} which generalizes \VIZ{layer} to also adaptively sample the frequencies. The frequencies are updated by the Adaptive Metropolis sampling Algorithm \ref{alg:ARFM}, which approximately samples the optimal frequencies. 

Algorithm \ref{alg:ARFM} has been introduced and described in detail in \cite{ARFM}. Algorithm \ref{alg:ARFM} is based on the computed absolute values of the amplitudes, aimed to become equidistributed by Metropolis updates of the frequencies. To have the Metroplis test related to equidistributed amplitudes, i.e. amplitudes with the same absolute values, lead to approximate optimal sampling of frequencies, as described  in \cite{ARFM}. From 
a given set of training data $\{(x_n, y_n)\}_{n=1}^N$ the amplitudes are determined by solving linear least squares problems for the different frequencies updates
\begin{equation}\label{eq:num_disc_prob}
    \min_{\boldsymbol{\hat{\beta}}\in \mathbb{C}^K}\big(N^{-1}|\BARS\boldsymbol{\hat{\beta}}-\mathbf y|^2 + \hat \delta|\boldsymbol{\hat{\beta}}|^2\big)\,,
\end{equation}
where $\BARS\in\mathbb{C}^{N\times K}$ is the matrix with elements $\BARS_{n,k} = e^{{\IU}\omega_k\cdot x_n}$, $n = 1,\ldots,N$, $k = 1,\ldots,K$, $\mathbf y=(y_1,\ldots,y_N)$ $\in \mathbb{R}^N$ and $\hat\delta$ is a non negative Tikhonov parameter.


When training layers deeper than the first hidden layer we use Algorithm \ref{alg:ARFM} with a slight modification. In the layer by layer method \VIZ{layer} the problem defining the amplitudes is changed to the following least squares problem for the training data $\{(x_n, y_n)\}_{n=1}^N$,
\begin{equation}\label{eq:disc_lsq}
    \min_{\boldsymbol{\bar c}, \boldsymbol{\bar b}\in \mathbb{C}^{K}}\big(N^{-1}|\BARS
    \left[
\begin{array}{c}
\boldsymbol{\bar c}\\
\boldsymbol{\bar b}
\end{array} \right]\\
    -\mathbf r|^2 + \hat\delta|(\boldsymbol{\bar c}, \boldsymbol{\bar b})|^2\big)\,,
\end{equation}
where $\BARS\in\mathbb{C}^{N\times 2K}$ is the matrix with elements $\BARS_{n,k} = e^{{\IU}\omega'_{\ell k}\cdot x_n}$ for $n = 1,\ldots,N$, $k = 1,\ldots,K$, and $\BARS_{n,k} = e^{{\IU}\omega_{\ell k} {\bar z}_\ell(x_n)}$ for $n = 1,\ldots,N$, $k = K+1,\ldots,2K$ and $\mathbf r=(y_1-{\bar z}_\ell(x_1),\ldots,y_N-{\bar z}_\ell(x_N))\in \mathbb{R}^N$. The frequencies $\omega_{\ell k}$ are samples from the standard normal distribution 
and are never updated. 

Algorithm \ref{alg:lblARFM} is based  on the layer by layer method \VIZ{layer} with each 
least squares solution replaced by calling the adaptive Metropolis Algorithm \ref{alg:ARFM}.
When running Algorithm \ref{alg:ARFM} with the modification  \VIZ{eq:disc_lsq} used in Algorithm \ref{alg:lblARFM} we say that we run the \emph{modified} Algorithm \ref{alg:ARFM}.
In the layer by layer method \VIZ{layer} the first layer starting with $\bar z_0=0$ will precisely do the initial step of constructing $\beta$ in \VIZ{min_ls}, \VIZ{betastar} and \VIZ{min_barz}. Therefore Algorithms \ref{alg:lblARFM} and \ref{alg:prepost} are formulated without mentioning  $\beta$. 

\subsection{Global post-training}
In the layer by layer build up of a residual neural network each layer is optimized one at a time. As post-training after running Algorithm \ref{alg:lblARFM} a global optimizer, training the whole residual network \VIZ{min_barz}, is used to further increase the approximation property of the residual neural network as presented in Algorithm \ref{alg:prepost}. In Algorithm \ref{alg:prepost} the  number of training points $N_1$ for the pre-training can be chosen significantly smaller than $N$ according to our experimental results.

\begin{algorithm}[tb]
\caption{Adaptive random Fourier features with Metropolis sampling}\label{alg:ARFM}
\begin{algorithmic}
\STATE {\bfseries Input:} $\{(x_n, y_n)\}_{n=1}^N$\COMMENT{data}
\STATE {\bfseries Output:} $x\mapsto\sum_{k=1}^K\hat\beta_ke^{{\IU}\omega_k\cdot x}$\COMMENT{random features}
\STATE Choose a sampling time $T$, a proposal step length $\check\delta$, an exponent $\gamma$, a Tikhonov parameter $\hat\delta$ and a frequency $m$ of  $\boldsymbol{\hat{\beta}}$ updates
\STATE $M \gets \mbox{ integer part}\, (T/\check\delta^2)$
\STATE ${\BFO} \gets \textit{the zero vector in $\rset^{Kd}$}$
\STATE $\boldsymbol{\hat{\beta}} \gets \textit{minimizer of  problem \VIZ{eq:num_disc_prob} given } \BFO$
\FOR{$i = 1$ {\bfseries to} $M$}
    \STATE $r_{\mathcal{N}} \gets \textit{standard normal random vector in $\rset^{Kd}$}$
    \STATE $\BFO' \gets \BFO + \check\delta r_{\mathcal{N}}$ \COMMENT{random walk Metropolis proposal}
    \STATE $\boldsymbol{\hat{\beta}}' \gets \textit{minimizer of  problem \VIZ{eq:num_disc_prob} given } \BFO'$
    \FOR{$k = 1$ {\bfseries to} $K$}
        \STATE  $r_{\mathcal{U}} \gets \textit{sample from uniform distribution on $[0,1]$}$ %
        \IF {$|\hat{\beta}'_k|^\ALPHAEX/|\hat{\beta}_k|^\ALPHAEX>r_{\mathcal{U}}$\COMMENT{Metropolis test}}
                    \STATE $\omega_{k} \gets \omega'_k$
                    \STATE $\hat{\beta}_{k} \gets \hat{\beta}'_k$
                \ENDIF
            \ENDFOR
            \IF {$i \mod m = 0$}
                \STATE $\boldsymbol{\hat{\beta}} \gets \textit{minimizer of problem \VIZ{eq:num_disc_prob} with adaptive } \BFO$
            \ENDIF
\ENDFOR
\STATE $\boldsymbol{\hat{\beta}} \gets \textit{minimizer of problem \VIZ{eq:num_disc_prob} with adaptive } \BFO$
\STATE $x\mapsto\sum_{k=1}^K\hat\beta_ke^{{\IU}\omega_k\cdot x}$

\end{algorithmic}
\end{algorithm}

\begin{algorithm}[tb]
\caption{Layer by layer adaptive random Fourier features with Metropolis sampling}\label{alg:lblARFM}
\begin{algorithmic}
\STATE {\bfseries Input:} $\{(x_n, y_n)\}_{n=1}^N$\COMMENT{data}
\STATE {\bfseries Output:} $x\mapsto \bar{z}_L(x)$\COMMENT{Residual network}
\STATE Choose the number of layers $L$
\STATE ${\bar z}_1 \gets \textit{real part of output from Algorithm \ref{alg:ARFM} run on data } \{(x_n, y_n)\}_{n=1}^N$
\STATE $\{r_n\}_{n=1}^N \gets \{y_n-{\bar z}_1(x_n)\}_{n=1}^N$
\FOR{$\ell = 2$ {\bfseries to} $L$}
\STATE  $Sa\gets \textit{real part of output from modified Algorithm \ref{alg:ARFM} run on data } \{(x_n, r_n)\}_{n=1}^N$
\STATE  $\bar z_{\ell}=\bar z_{\ell-1}+Sa$
    \STATE $\{r_n\}_{n=1}^N \gets \{y_n-{\bar z}_{\ell}(x_n)\}_{n=1}^N$
\ENDFOR
\STATE $x\mapsto \bar{z}_L(x)$

\end{algorithmic}
\end{algorithm}



\begin{algorithm}[tb]
\caption{Global optimization of layer by layer pre-trained residual network}\label{alg:prepost}
\begin{algorithmic}
\STATE {\bfseries Input:} $\{(x_n, y_n)\}_{n=1}^N$\COMMENT{data}
\STATE {\bfseries Output:} $x\mapsto \bar{z}_L(x)$\COMMENT{Residual network}
\STATE Choose the number of layers $L$ and the number of data points $N_1\leq N$ for the pre-training
\STATE $\bar{z}_L\gets \textit{output from Algorithm \ref{alg:lblARFM} run on data } \{(x_n, y_n)\}_{n=1}^{N_1}$
\STATE $\bar{z}_L\gets \textit{output from global optimizer,  based on \VIZ{min_barz}, run on } \bar{z}_L\textit{ and data } \{(x_n, y_n)\}_{n=1}^{N}$
\STATE $x\mapsto \bar{z}_L(x)$

\end{algorithmic}
\end{algorithm}



\begin{figure}[t!]
\centering
\begin{minipage}{0.45\textwidth}
\includegraphics[width=0.98\textwidth]{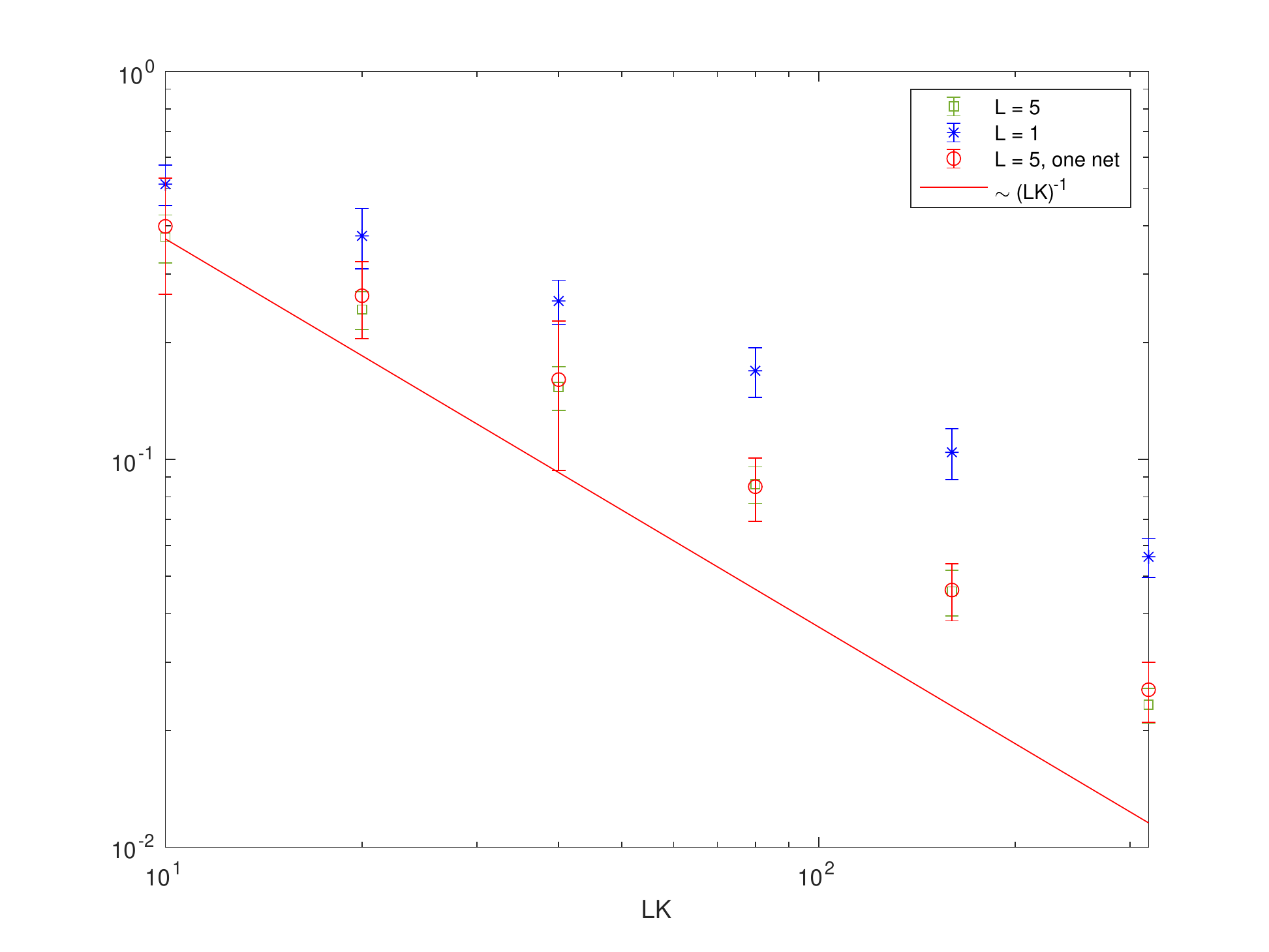}
\caption{Method 1, dimension $d=3$: dependence of the generalization error on the total number of nodes, $KL$, with the error bars defined by \VIZ{eq:error_bars}. Presented errors: a one layer neural network, 
a five layer neural network, and a five layer neural network for which $\bar{z}_{\ell+1} - \bar{z}_{\ell}$ is defined to be a one layer neural network with frequencies in dimension $d + 1$ and data for $L > 1$ are of the form $\{(x_n, \bar{z}_{\ell}(x_n)), y_n\}^{N}_{n=1}$.}
\label{fig:case1_LK}
\end{minipage}
\quad
\begin{minipage}{0.45\textwidth}
\includegraphics[width=0.98\textwidth]{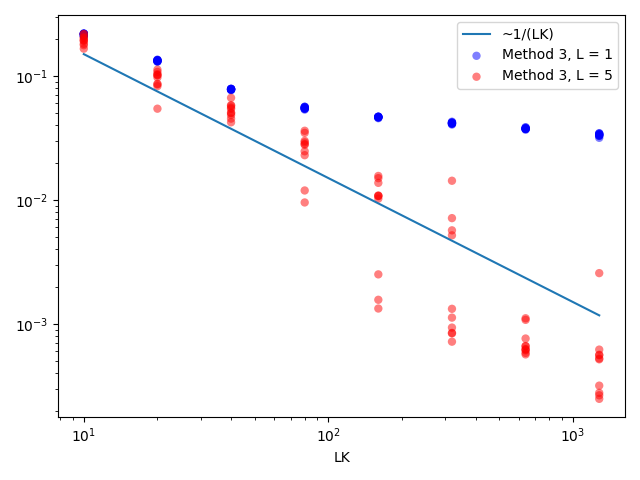}
\caption{Method 3, dimension $d=4$: dependence of the generalization error on  the total number of nodes, $KL$. For each value of $KL$ the $11$ blue dots and the $10$ red dots represent the different outcomes of the generalization error for $L = 1$ and $L = 5$, respectively. The blue dots are almost on top of each other.
}\label{fig:case3_LK}
\end{minipage}
\end{figure}

\section{Numerical experiments}\label{sec:numerics}
In this section we present numerical experiments. Our first objective is to study how the generalization error behaves in practice with respect to the total number of nodes $KL$. The second objective is to study how the generalization error depends on the choice of algorithm to train the residual neural network.  This is performed  using three methods:
\begin{itemize}
    \item Method 1: The residual networks are trained by Algorithm \ref{alg:lblARFM}.
    \item Method 2: The residual networks are trained by a global optimizer.
    \item Method 3: The residual networks are trained by Algorithm \ref{alg:prepost}.
\end{itemize}
Consider for $x\in\rset^d$ the target functions \[f_1(x) = \mathrm{Si}\left(\frac{x_1}{a}\right)e^{-\frac{|x|^2}{2}}\] and \[f_2(x) = \mathrm{Si}\left(\frac{x_1}{a}\right)e^{-\frac{|x_1|^2}{2}}\] where $a = 10^{-2}$ and \[\mathrm{Si}(v) := \int_0^v \frac{\sin(t)}{t}\mathrm{d}t\] is the so called \emph{Sine integral}. We generate training data by sampling the components of $x_n = (x_{n1},$ $x_{n2}, \ldots, x_{nd})\in\rset^d$ from the standard normal distribution, setting $y_n = f_1(x_n)$ or $y_n = f_2(x_n)$ and normalizing the data by subtracting the mean and dividing by the standard deviation component wise. The test data is generated analogously but normalized with the mean and the standard deviation computed from the training data set. The parameter choices are presented in Table \ref{table:paramsum}. For Method 1, error bars are generated and presented in Figure \ref{fig:case1_LK}. The error bars are defined as the closed intervals 
\begin{equation}\label{eq:error_bars}
    [e_{K} - 2{\sigma}_K, e_{K} + 2{\sigma}_K]
\end{equation}
where ${\sigma}_K$ is the empirical standard deviation of the generalization error and $e_{K}$ is the empirical mean of the generalization errors after $\bar M$ independent realizations.

When running Method 3 we generate Figure \ref{fig:case3_LK} where instead of error bars we present $11$ outcomes for each $KL$ when $L = 1$ and $10$ outcomes for each $KL$ when $L = 5$.

\subsection{Method 1: The residual networks are trained by Algorithm \ref{alg:lblARFM}}
With Method 1 we build up the residual neural network layer by layer with Algorithm \ref{alg:lblARFM}. In Algorithm \ref{alg:ARFM},  the number of iterations per layer is chosen large with the purpose of minimizing the error. Figure \ref{fig:case1_LK} presents the generalization error with respect to $KL$ for two different values of the number of layers $L$.

\subsection{Method 2: The residual networks are trained by a global optimizer}
In Method 2 we run the global optimizer Adam, see \cite{kingma2014adam}, with frequencies and amplitudes initialized by Xavier normal initialization, see \cite{Xavier}, with a learning rate decreasing as $\Delta t_e / t_e$ where $\Delta t_e$ is the initial learning rate and $t_e$ is the epoch number $t_e$. The fact that the choice of initial distribution of the frequencies and amplitudes is not obvious in all situations is a partial motivation for Algorithm \ref{alg:prepost}, which with Algorithm \ref{alg:lblARFM} asymptotically samples the optimal frequencies and solves for the corresponding amplitudes layer by layer and uses these values as initial values for the global optimizer.

We present the validation and training mean squared errors, with respect to the number of epochs running Adam in Figure \ref{fig:L5_dec_dt}.

In Table \ref{table:stat_errs} we present the empirical average of the generalization errors and standard deviation for $\bar{M}$ runs of Method 2 for the target functions $f_1$ and $f_2$ and in Figures \ref{fig:target_fcn_and_nn_f1} and \ref{fig:target_fcn_and_nn_f2} we present the target functions and neural networks plotted for one run each.

\subsection{The residual networks are trained by Algorithm \ref{alg:prepost}}
With Method 3 we run Algorithm \ref{alg:prepost}. That is, we build up a residual neural network layer by layer which we use as an initial neural network for a global optimizer. In this case we choose Adam with decreasing learning rate as the global optimizer.

The main objective of Method 3 is to study how the generalization error depends on $KL$,
see Figure \ref{fig:case3_LK}. 
Figures \ref{fig:target_fcn_and_nn_f1} and \ref{fig:target_fcn_and_nn_f2} show plots of the neural network and Table \ref{table:stat_errs} presents the empirical average of the generalization errors and the standard deviation after $\bar{M}$ independent runs.




\begin{table}[t!]
\caption{The generalization error from $\bar{M}$ independent runs for data in dimension $d = 10$.}
{
\small
\centering
\bgroup
\def\arraystretch{1.4}
\begin{tabular}{|l|c|c|c|c|}
\hline
Target function                                                                                      & \multicolumn{2}{c|}{$f_1$}                                                                                                                  & \multicolumn{2}{c|}{$f_2$}                                                                                                                  \\ \hline
Method                                                                                               & \begin{tabular}[c]{@{}c@{}}$2$\\ Xavier\\ \&\\ Adam\end{tabular} & \begin{tabular}[c]{@{}c@{}}$3$\\ Layer by layer\\ \&\\ Adam\end{tabular} & \begin{tabular}[c]{@{}c@{}}$2$\\ Xavier\\ \&\\ Adam\end{tabular} & \begin{tabular}[c]{@{}c@{}}$3$\\ Layer by layer\\ \&\\ Adam\end{tabular} \\ \hline
$\bar{M}$                                                                                            & \multicolumn{2}{c|}{$10$}                                                                                                                   & $20$                                                             & $10$                                                                     \\ \hline
\begin{tabular}[c]{@{}l@{}}Empirical average\\ of the generalization\\ error\end{tabular}            & $0.0448$                                                         & $0.0011$                                                                 & $2.69\times 10^{-4}$                                             & $1.05 \times 10^{-5}$                                                    \\ \hline
\begin{tabular}[c]{@{}l@{}}Empirical standard\\ deviation of the\\ generalization error\end{tabular} & $0.0209$                                                         & $0.0005$                                                                 & $2.34\times 10^{-4}$                                             & $3.94\times 10^{-6}$                                                     \\ \hline
\end{tabular}
\egroup
}
\label{table:stat_errs}
\end{table}

\begin{figure}[t!]
\centering
\begin{minipage}{0.45\textwidth}
\includegraphics[width=0.95\linewidth]{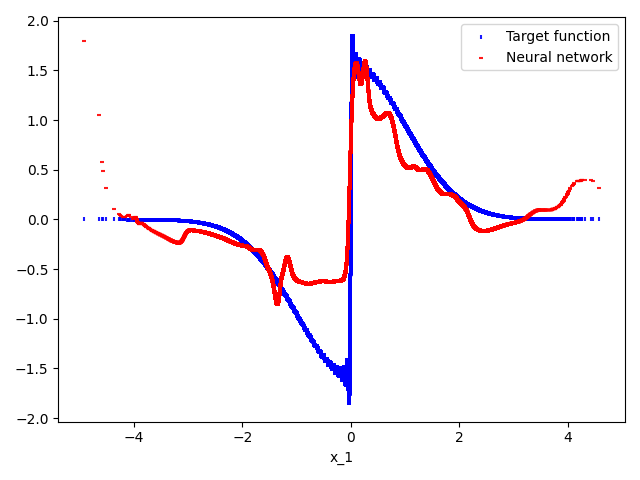}
\caption{Method 2: Xavier \& Adam}
\label{fig:tfnn_c2s}
\end{minipage}
\quad
\begin{minipage}{0.45\textwidth}
\includegraphics[width=0.95\linewidth]{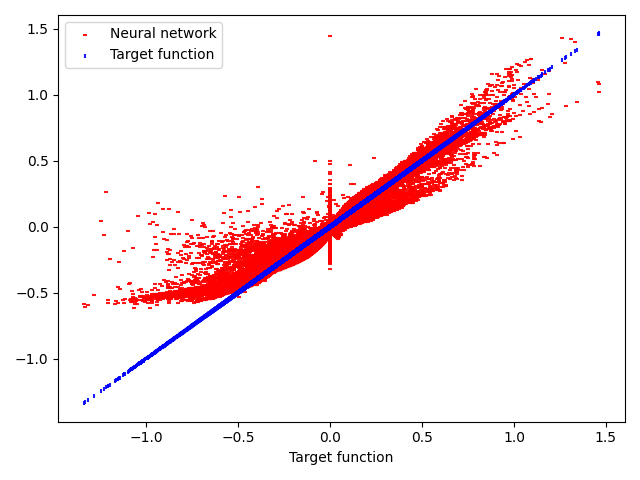}
\caption{Method 2: Xavier \& Adam. \newline
Correlation between neural network and target function: $0.968$.}
\label{fig:tfnn_c2f}
\end{minipage}

\vspace{0.4cm}

\begin{minipage}{0.45\textwidth}
\includegraphics[width=0.95\linewidth]{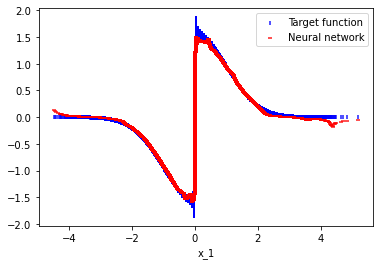}
\caption{Method 3: Layer by layer \& Adam}
\label{fig:tfnn_c3s}
\end{minipage}
\quad
\begin{minipage}{0.45\textwidth}
\includegraphics[width=0.95\linewidth]{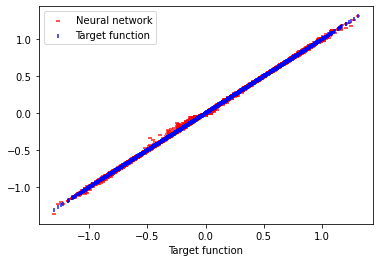}
\caption{Method 3: Layer by layer \& Adam. \newline
Correlation between neural network and target function: $0.999754$.}
\label{fig:tfnn_c3f}
\end{minipage}
\caption{Target function $f_1$ in dimension $d=10$, and approximating neural network. The subfigures to the left show a slice of the functions along the $x_1$-axis and the figures to the right show the function values plotted against the target function values.}
\label{fig:target_fcn_and_nn_f1}
\end{figure}




\begin{figure}[t!]
\begin{minipage}{0.45\textwidth}
\centering
\includegraphics[width=0.95\linewidth]{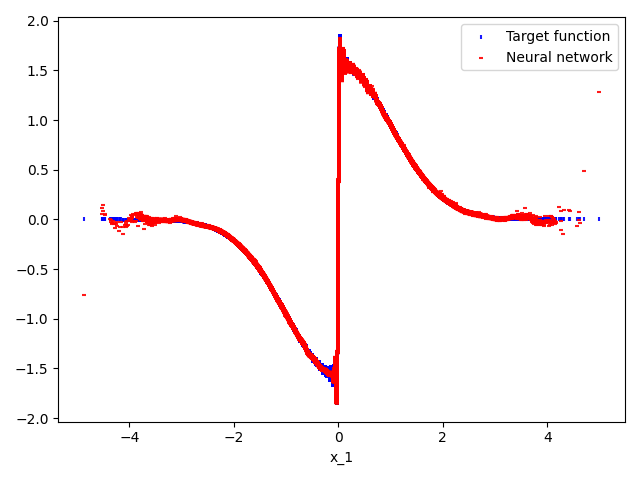}
\caption{Method 2: Xavier \& Adam}\label{fig:tfnn_c2_f2}
\end{minipage}
\quad
\begin{minipage}{0.45\textwidth}
\centering
\includegraphics[width=0.95\linewidth]{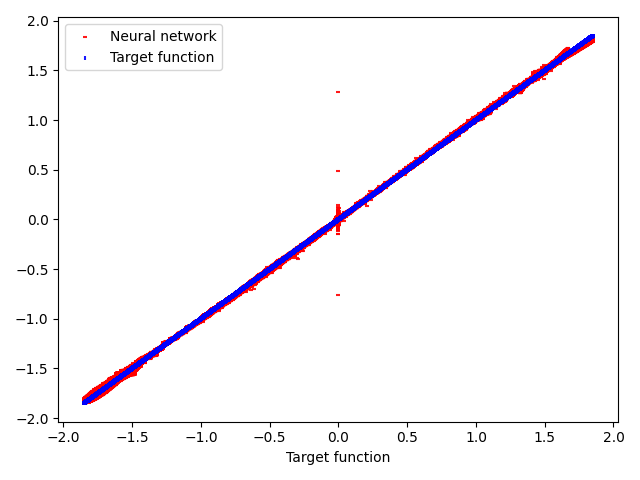}
\caption{Method 2: Xavier \& Adam}\label{fig:tfnn_c2_f2_yvy}
\end{minipage}

\vspace{0.4cm}

\begin{minipage}{0.45\textwidth}
\centering
\includegraphics[width=0.95\linewidth]{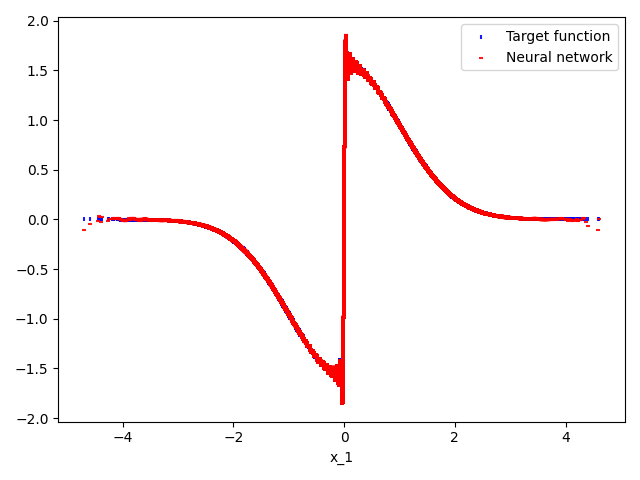}
\caption{Method 3: Layer by layer \& Adam}\label{fig:tfnn_c3_f2}
\end{minipage}
\quad
\begin{minipage}{0.45\textwidth}
\centering
\includegraphics[width=0.95\linewidth]{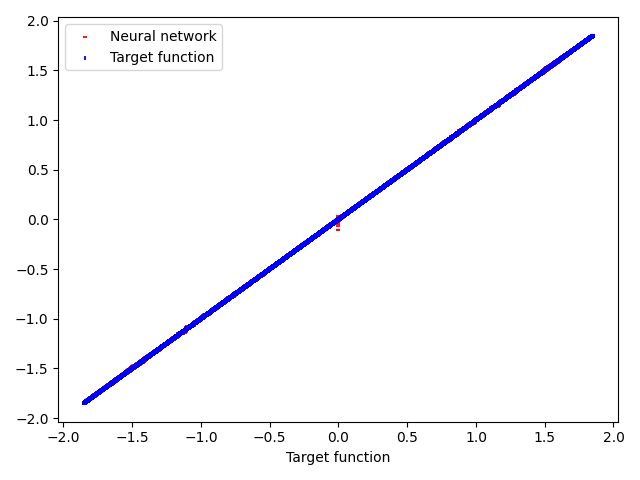}
\caption{Method 3: Layer by layer \& Adam}\label{fig:tfnn_c3_f2_yvy}
\end{minipage}
\caption{Target function $f_2$ in dimension $d=10$, and approximating neural network. The subfigures to the left show a slice of the functions along the $x_1$-axis and the figures to the right show the function values plotted against the target function values.
For Method 3, layer-by-ayer \& Adam, the neural network values  coincide with the target function values in the plot resolution.}
\label{fig:target_fcn_and_nn_f2}
\end{figure}

\begin{figure}[t!]
\centering
\includegraphics[width=0.99\textwidth]{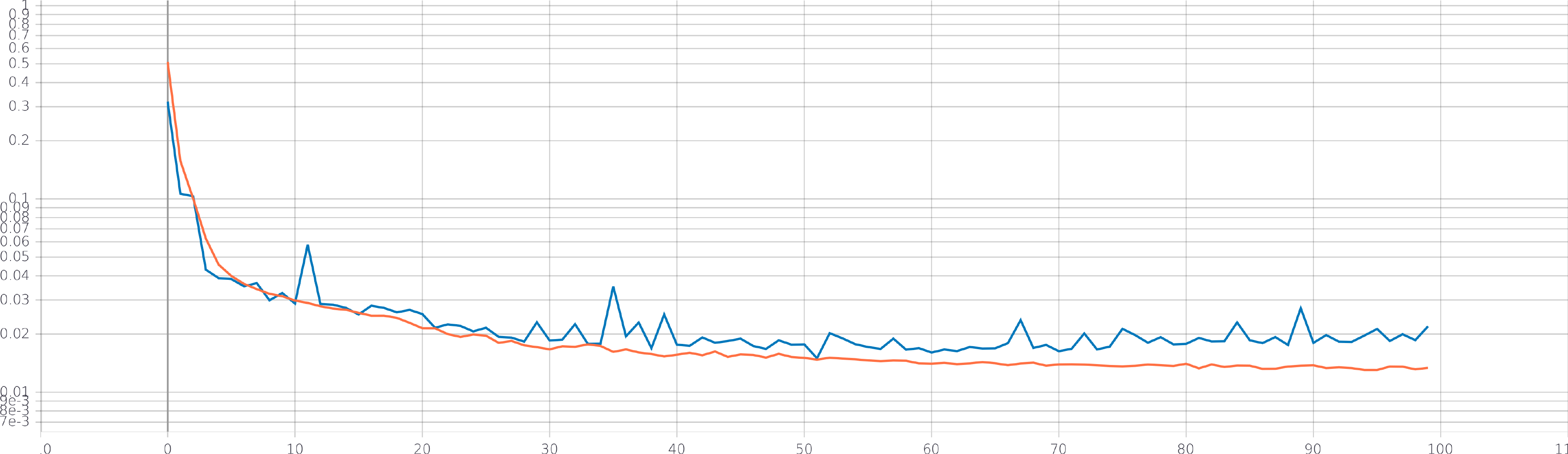}
\caption{The TensorBoard, see \cite{TensorBoard}, generated figure shows how the training error (orange) and the validation error (blue) decreases with respect to the number of epochs when running Method 2.}\label{fig:L5_dec_dt}
\end{figure}

\begin{table}
\caption{Summary of the parameter choices for the numerical experiments}
{
\centering
{
\bgroup
\renewcommand{\arraystretch}{1.4}
\setlength{\tabcolsep}{6pt}
\begin{tabular}{|l|c|c|c|c|c|c|c|}
\hline
                 & \multicolumn{7}{c|}{Regression}                                                                                                                                                                                                                                                                         \\ \hline
Method           & Method 1                     & \multicolumn{2}{c|}{Method 2}                                                                                        & \multicolumn{4}{c|}{Method 3}                                                                                                                     \\ \hline
Algorithm        & Algorithm 2                  & \multicolumn{2}{c|}{Xavier + Adam}                                                                                   & \multicolumn{4}{c|}{Algorithm 3}                                                                                                                  \\ \hline
$d$              & $3$                          & \multicolumn{2}{c|}{$10$}                                                                                            & \multicolumn{2}{c|}{$4$}                          & $10$                                       & $10$                                             \\ \hline
$KL$             & \begin{tabular}[c]{@{}c@{}}$5\times 2^i$,\\ $ i = 1,2,...,6$\end{tabular}  & $2560$                                                            & $1280$                                           & \multicolumn{2}{c|}{\begin{tabular}[c]{@{}c@{}}$5\times 2^i$,\\ $ i = 1,2,...,8$\end{tabular} } & $2560$                                     & $1280$                                           \\ \hline
$L$              & $1$ and $5$                  & \multicolumn{2}{c|}{$10$}                                                                                            & $1$                      & $5$                    & \multicolumn{2}{c|}{$10$}                                                                     \\ \hline
$N$              & $2\times 10^4$               & \multicolumn{6}{c|}{$10^6$}                                                                                                                                                                                                                                              \\ \hline
$\tilde{N}$      & $2\times 10^4$               & \multicolumn{6}{c|}{$10^6$}                                                                                                                                                                                                                                              \\ \hline
Target func.   & \multicolumn{2}{c|}{$f_1$}                                                                       & $f_2$                                            & \multicolumn{3}{c|}{$f_1$}                                                                     & $f_2$                                            \\ \hline
$N_1$            &                              &                                                                   &                                                  & \multicolumn{4}{c|}{$10^4$}                                                                                                                       \\ \hline
$\bar{M}$        & \multicolumn{2}{c|}{$10$}                                                                        & $20$                                             & $11$                     & $10$                   & $10$                                       & $10$                                             \\ \hline
$\gamma$         & $3d-2$                       &                                                                   &                                                  & \multicolumn{4}{c|}{$3d-2$}                                                                                                                       \\ \hline
${\hat \delta}$  & $1.1$                        &                                                                   &                                                  & \multicolumn{4}{c|}{$1.1$}                                                                                                                        \\ \hline
$M$              & $2000$                       &                                                                   &                                                  & \multicolumn{2}{c|}{$200$}                        & $400$                                      & $600$                                            \\ \hline
Batch size       &                              & \multicolumn{6}{c|}{$100$}                                                                                                                                                                                                                                               \\ \hline
Num. epochs &                              & $100$                                                             & $200$                                            & \multicolumn{2}{c|}{$150$}                        & $100$                                      & $200$                                            \\ \hline
$\check\delta$   & $0.5\times 2.4^2/d$          &                                                                   &                                                  & \multicolumn{4}{c|}{$0.5\times 2.4^2/d$}                                                                                                          \\ \hline
$\Delta t_e$     &                              & \multicolumn{6}{c|}{$0.001$}                                                                                                                                                                                                                                             \\ \hline
$m$              & $1$                          &                                                                   &                                                  & \multicolumn{4}{c|}{$1$}                                                                                                                          \\ \hline
Figure           & $\ref{fig:case1_LK}$         & $\ref{fig:tfnn_c2s}$, $\ref{fig:tfnn_c2f}$, $\ref{fig:L5_dec_dt}$ & $\ref{fig:tfnn_c2_f2}, \ref{fig:tfnn_c2_f2_yvy}$ & \multicolumn{2}{c|}{$\ref{fig:case3_LK}$}         & $\ref{fig:tfnn_c3s}$, $\ref{fig:tfnn_c3f}$ & $\ref{fig:tfnn_c3_f2}, \ref{fig:tfnn_c3_f2_yvy}$ \\ \hline
Table            &                              & \multicolumn{2}{c|}{$\ref{table:stat_errs}$}                                                                         &                          &                        & \multicolumn{2}{c|}{$\ref{table:stat_errs}$}                                                  \\ \hline
\end{tabular}
\egroup
}
\label{table:paramsum}
}
\end{table}

\subsection{Summary of experimental results}
In Table~\ref{table:stat_errs} we see that Method 3, i.e., layer-by-layer \& Adam, produces a smaller  generalization error than Method 2, i.e. Adam initialized by Xavier, for the chosen number of epochs for both target functions $f_1$ and $f_2$. We note from 
Figure~\ref{fig:target_fcn_and_nn_f1} that the approximation Method 3 produces a better fit to the target function than Method 2. Both methods would give better results for larger values of $K$ and if ran for a larger number of epochs. It cannot be concluded that Method 2 would not eventually produce a generalization error of approximately the same value as Method 3.

In Figure~\ref{fig:case1_LK} the generalization error decreases as expected, namely as $\mathcal{O}((KL)^{-1})$, and we note that for a fixed value of $KL$, the generalization error in Method 1 is, also as expected, smaller for $L = 5$ than for $L = 1$.

Figure~\ref{fig:L5_dec_dt} shows how the error decreases with respect to the number of epochs for Adam iterations. For the specific run the number of epochs is chosen to be $100$ with the purpose to limit the computational time even though the error continues to decrease. The choice of the number of epochs to run is a balance between getting the smallest error and getting a reasonable computational time.

When running Method 3 for several independent instances and  several different values of $KL$ we note, from Figure~\ref{fig:case3_LK}, that when $L = 1$ the generalization error has much smaller spread between each instance compared to when $L = 5$. On the other hand, the generalization error is much smaller for each run when $L = 5$ than when $L = 1$.

\section{Conclusions}\label{sec_conclusion}
A mean field optimal control applied to deep residual networks based on random Fourier features is used to derive an optimal distribution of random Fourier features. This provides a smaller generalization error compared to the well known estimate in \cite{barron,barron:94} for a network with one hidden layer and the same total number of nodes. The insight provided by this result is used to construct a new layer-by-layer training algorithm. The layer-by-layer method samples an approximation of the optimal distribution using an adaptive Metropolis method, where the computed amplitudes are used in the Metropolis proposal step.
The layer-by-layer method is used as an initialization for a global optimizer, such as Adam.

\section*{Acknowledgment}
This research was supported by
Swedish Research Council grant 2019-03725. The work of P.P. was supported in part by the ARO Grant W911NF-19-1-0243.
The work of R.T. was partially supported by the KAUST Office of Sponsored Research (OSR) under Award numbers URF$/1/2281-01-01$, URF$/1/2584-01-01$ in the KAUST Competitive Research Grants Program Round 8 and the Alexander von Humboldt Foundation.


\bibliographystyle{plain}
\bibliography{ref}   

\end{document}